\newcommand{\Z}{\mathbb{Z}}						% Integers
\newcommand{\R}{\mathbb{R}}						% Reals
\renewcommand{\S}{\mathbb{S}}					% Sphere
\newcommand{\B}{\mathbb{B}}
\newcommand{\eps}{\varepsilon}					% Epsilon shortcut
\newcommand{\dd}								% Differential d
{\mathop{}\!\mathrm{d}}						
\newcommand{\ddn}[1]							% Powers of a differential d
{\mathop{}\!\mathrm{d^{#1}}}
\newcommand{\abs}[1]							% Absolute value
{\left| #1 \right|}
\newcommand{\smallabs}[1]						% Small absolute value bars which won't scale to the argument.
{\lvert #1 \rvert}	
\newcommand{\norm}[1]							% Norm 
{\left\lVert #1 \right\rVert}	
\newcommand{\smallnorm}[1]						% Small norm bars which won't scale to the argument.
{\lVert #1 \rVert}						
\newcommand{\ip}[2]								% Inner product
{\left< #1 , #2 \right>}
\DeclareMathOperator*{\osc}{osc}		
\DeclareMathOperator*{\esssup}{ess\,sup}		% Essential supremum
\DeclareMathOperator*{\essinf}{ess\,inf}
\DeclareMathOperator*{\esslimsup}{ess\,lim\,sup}
\DeclareMathOperator*{\essliminf}{ess\,lim\,inf}
\DeclareMathOperator*{\essosc}{ess\,osc}		
\DeclareMathOperator{\divg}{div}
\DeclareMathOperator{\dist}{dist}
\newcommand{\loc}{\mathrm{loc}}
\newcommand{\cH}{\mathcal{H}}
\renewcommand{\phi}{\varphi}
\def\XXint#1#2#3{{\setbox0=\hbox{$#1{#2#3}{\int}$}
		\vcenter{\hbox{$#2#3$}}\kern-.5\wd0}}
\newtheorem{thm}{Theorem}[section]{\bf}{\it}
\newtheorem{lemma}[thm]{Lemma}
\newtheorem{prop}[thm]{Proposition}
\newtheorem{cor}[thm]{Corollary}
\theoremstyle{definition}
\newtheorem{defn}[thm]{Definition}
\theoremstyle{remark}
\newtheorem{rem}[thm]{Remark}
\numberwithin{equation}{section}
\begin{document}
	
	\title{Continuity for Sobolev mappings with null Lagrangian bounds}

	\author[I. Kangasniemi]{Ilmari Kangasniemi}
	\address{Department of Mathematics and Statistics, 
		P.O. Box 35 (MaD), FI-40014 University of Jyväskylä, Finland.
	}
	\email{ilmari.k.kangasniemi@jyu.fi}
	
	\author[J. Onninen]{Jani Onninen}
	\address{Department of Mathematics, Syracuse University, Syracuse,
		NY 13244, USA and  Department of Mathematics and Statistics, P.O.Box 35 (MaD) FI-40014 University of Jyv\"askyl\"a, Finland
	}
	\email{jkonnine@syr.edu}
	
	\subjclass[2020]{Primary 30C65; Secondary 35R45, 46E35}
	\date{\today}
	\keywords{Null Lagrangian, null Lagrangian inequality, quasiregular map, quasiregular value, mapping of finite distortion, value of finite distortion, QR, MFD, weakly monotone, superlevel Sobolev inequality, distribution function}
	\thanks{J. Onninen was supported by the National Science Foundation grant DMS-2453853. During the early investigative phase of this work, I.\ Kangasniemi was supported by the National Science Foundation grant DMS-2247469.}
	
	\begin{abstract}	
		We prove the continuity of Sobolev functions $\varphi \in W^{1,n}_{\mathrm{loc}}(\Omega)$, $\Omega \subset \mathbb{R}^n$, that satisfy \[
			\lvert\nabla \varphi(x)\rvert^n \le K(x)\bigl(\langle \nabla \varphi(x), \xi(x)\rangle + A(x)\bigr),
		\] 
		where $\xi \in L_{\mathrm{loc}}^{n/(n-1)}(\Omega, \mathbb{R}^n)$ is weakly divergence-free, and $K \in L^p_{\mathrm{loc}} (\Omega)$, $A \in L^q_{\mathrm{loc}} (\Omega)$ are non-negative with $p^{-1}+q^{-1}<1$. The result is applicable to a broad class of differential inequalities of null Lagrangian type. As our principal application, we obtain a sharp continuity theorem for $f \in W^{1,n}_{\mathrm{loc}} (\Omega, \mathbb{R}^n)$ satisfying the distortion inequality with defect $\lvert Df(x)\rvert^n \le K(x)\det Df(x) + \Sigma(x)$; this result is new even in the planar case, and closes a significant gap between existing methods and known counterexamples. The proof relies on an overlooked Sobolev-type inequality formulated in terms of measures of superlevel sets.
	\end{abstract}

	\maketitle
	
	\section{Introduction}	
	Let $\Omega \subset \R^n$ be open. We study real-valued functions $\phi \in W^{1,n}_\loc(\Omega)$ satisfying the pointwise inequality
	\begin{equation}\label{eq:divergence_distortion_ineq}
		\abs{\nabla \phi(x)}^n \le K(x) \bigl( \ip{\nabla \phi(x)}{\xi(x)} + A(x) \bigr) \qquad 
		\text{for a.e.\ } x \in \Omega,
	\end{equation}
	where $K,A \colon \Omega \to [0,\infty)$ are measurable and $\xi \in L^{n/(n-1)}_{\loc}(\Omega,\R^n)$ is weakly divergence free. The following theorem provides the precise regularity threshold which ensures the continuity of $\varphi$.
	
	\begin{restatable}{thm}{continuitygeneral}\label{thm:continuity_general}
		Let $n \ge 2$, let $\Omega \subset \R^n$ be open, let $K, A \colon \Omega \to [0, \infty)$ be measurable, and let $\xi \in L^{n/(n-1)}_\loc(\Omega, \R^n)$ with $\divg \xi = 0$ weakly. Suppose that $\phi \in W^{1,n}_\loc(\Omega)$ satisfies \eqref{eq:divergence_distortion_ineq}.
		If there exist $p, q \in [1, \infty]$ such that
		\[
			K \in L^p_\loc(\Omega) \quad \text{and} \quad  A \in L^q_\loc(\Omega) \quad \text{with} \quad \frac{1}{p} + \frac{1}{q} < 1,
		\]
		then $\phi$ admits a continuous representative. Moreover, for each $x_0 \in \Omega$, the local modulus of continuity of $\phi$ satisfies  $\omega_\phi(r;x_0)=O(\log^{-1/n}(1/r))$ as $r \to 0$. 
	\end{restatable}
	
	This result is sharp. Indeed, as shown in \cite[Theorem 1.11]{Dolezalova-Kangasniemi-Onninen_MGFD-cont}, for all $p, q \in [1, \infty]$ with $1/p + 1/q \ge 1$, there exist $\varphi \in W^{1,2}_\loc(\B^2)$, $\xi \in L^2_\loc(\B^2)$ with $\divg \xi = 0$ weakly, $K \in L^p_\loc(\B^2)$, and $A \in L^q_\loc(\B^2)$  such that \eqref{eq:divergence_distortion_ineq} holds while $\varphi$ fails to be continuous.

	\subsection{Null Lagrangians and the divergence structure} 
	Inequalities of the form \eqref{eq:divergence_distortion_ineq} arise naturally in the study of mappings $f \in W^{1,n}_{\loc}(\Omega,\R^m)$  satisfying a null Lagrangian inequality
	\begin{equation}\label{eq:null_lagrangian_inequality}
		|Df(x)|^n \le K(x)L(x,Df(x)) \qquad \text{for a.e. } x \in \Omega,
	\end{equation}
	where $\abs{Df(x)}$ denotes the operator norm  of $Df(x) \colon \R^n \to \R^m$,  and $L \colon \Omega \times \R^{n \times m} \to \R$ is a $C^1$-smooth null Lagrangian.  Recall that a mapping  $L \colon \Omega \times \R^m \times \R^{n \times m} \to \R$  is called a  \emph{null Lagrangian}  if 
	\[
		\int_\Omega L(x, (f+\eta)(x), D (f+\eta)(x)) \dd m_n(x)
		= \int_\Omega L(x, f(x), D f(x)) \dd m_n(x)
	\]
	for all $f \in C^1(\overline{\Omega}, \R^m)$ and $\eta \in C^\infty_c(\Omega, \R^m)$, where $m_n$ stands for the $n$-dimensional Lebesgue measure. The special case considered in \eqref{eq:null_lagrangian_inequality} corresponds to  $L$  being independent of the values of $f$

	The theory of null Lagrangians originates in  the classical calculus of variations and field theory, beginning with Carathéodory~\cite{Caratheodory1929}, Weyl~\cite{Weyl1935}, and Rund~\cite{Rund1966}; see also Morrey~\cite{Morrey1966}. In nonlinear elasticity, they play a fundamental role in variational formulations and stability theory; see e.g. the works of Ball~\cite{Ball1977}, Ball, Currie, and Olver~\cite{Ball-Currie-Olver1981}, Davini and Parry~\cite{Davini-Parry1988}, Edelen and Lagoudas~\cite{Edelen-Lagoudas1986}, Ericksen~\cite{Ericksen1962}, and Sivaloganathan~\cite{Sivaloganathan1988}. A structural characterization of null Lagrangians, due to Olver and Sivaloganathan~\cite{Olver-Sivaloganathan_nullLagrangians}, asserts that  every $C^1$-smooth null Lagrangian can be represented as 
	\[
		L(x, f(x), Df(x)) = L_0(x, f(x)) + \sum_I L_I(x, f(x)) M_I(Df(x)),
	\] 
	where $M_I(Df(x))$ are minors of $Df(x)$.
	
	If $L$ is independent of $f$ as in \eqref{eq:null_lagrangian_inequality}, it is  hence  an affine combination of minors of $Df$ with coefficients depending on $x$.  Consequently, if  $f \in W^{1,n}_{\loc }(\Omega, \R^m)$ 
	satisfies \eqref{eq:null_lagrangian_inequality} for such an $L$, then each coordinate function  $\varphi$ of $f$ satisfies an inequality of the form \eqref{eq:divergence_distortion_ineq}. The associated divergence-free vector field  $\xi$ arises from the terms where the minor $M_I(Df(x))$ involves $\nabla \phi$, and the function $A$ collects the remaining terms. Thus, if the resulting $A$ has sufficiently good regularity, then Theorem~\ref{thm:continuity_general} yields that $\phi$ is continuous. This approach to proving continuity is also valid for $W^{1,n}_\loc$-solutions of 
	\[
		|Df(x)|^n \le K(x)L(x, f(x), Df(x)) \qquad \text{for a.e. } x \in \Omega,
	\]
	whenever $L(x,f,Df) \le L(x,Df) + e^{|f|^\alpha} $ with $\alpha < n/(n-1)$, since in this case  Trudinger's theorem ensures that the exponential factor $e^{|f|^\alpha}$ can be absorbed into $A$.
 	\enlargethispage{-\baselineskip}%Pages 2 and 3 are shrunk by 1 line so Theorem 1.2 doesn't get cut off.

	\subsection{Distortion inequalities with defects} 
	A widely studied case of \eqref{eq:null_lagrangian_inequality} is when $m = n$, $K \ge 1$, and $L(x, Df(x)) = J_f(x)$ is the Jacobian determinant of $f$. This leads to the classical theory of \emph{mappings of finite distortion}. When $K$ is bounded, one obtains the intermediate class of \emph{mappings of bounded distortion}, or \emph{$K$-quasiregular maps}. The foundations of the higher dimensional theory of quasiregular maps were laid by Reshetnyak, who among other results established the local H\"older continuity of such mappings with sharp exponent  $1/K$ \cite{Reshetnyak_continuity, Reshetnyak-book}, generalizing a prior planar result by Morrey~\cite{Morrey}. Allowing $K$ to be unbounded leads to a substantially larger class of mappings which also arise naturally in nonlinear elasticity~\cite{Antman_Elasticity-book, Ball_nonlinear-elasticity, Ciarlet_Elasticity-book}. The continuity of $W^{1,n}_\loc$-regular mappings of finite distortion is valid with no integrability assumptions on $K$; see \cite{Godshtein-Vodopyanov_Continuity, Iwaniec-Koskela-Onninen-Invent}.

	Our original motivation behind Theorem \ref{thm:continuity_general} is to study mappings $f \in W^{1,n}_{\loc}(\Omega, \R^n)$ that satisfy the \emph{distortion inequality with defect}
	\begin{equation}\label{eq:generalized_finite_distortion}
    	\abs{Df(x)}^n \le K(x) J_f(x) + \Sigma(x),
	\end{equation}
	where $K \colon \Omega \to [1,\infty)$ and $\Sigma \colon \Omega \to [0,\infty)$ are measurable functions. With the introduction of the defect $\Sigma$, the continuity of solutions to~\eqref{eq:generalized_finite_distortion} becomes a delicate problem, previously understood essentially only in the case of bounded distortion $K \in L^\infty(\Omega)$. This setting already arises in the classical theory of elliptic partial differential equations \cite[Chapter~12]{Gilbarg-Trudinger_Book}. For bounded $K$ and $\Sigma$, H\"older regularity in the plane was established in the seminal works of Nirenberg~\cite{Nirenberg_Holder}, Finn and Serrin~\cite{Finn_Serrin}, and Hartman~\cite{Hartman}. These techniques also underlie Simon’s estimates for solutions between surfaces, with applications to equations of mean curvature type~\cite{Simon_QRGauss}.
	
	If $K \in L^\infty(\Omega)$ and $\Sigma \in L^p_{\loc}(\Omega)$ for some $p>1$, then every solution of~\eqref{eq:generalized_finite_distortion} admits a H\"older continuous representative. In the plane, this was proved by Astala, Iwaniec, and Martin \cite[Theorem~8.5.1]{Astala-Iwaniec-Martin_Book}; in higher dimensions, the result follows from prior work of the authors \cite[Section~3]{Kangasniemi-Onninen_Heterogeneous}. The planar argument relies on potential-theoretic methods from complex analysis, while the higher-dimensional proof proceeds along the lines of Morrey~\cite{Morrey} and Reshetnyak~\cite{Reshetnyak_continuity}.  These methods extend slightly beyond the case $K \in L^\infty(\Omega)$, as they remain valid under the exponential integrability condition  $\exp(\lambda K)\in L^1_{\loc}(\Omega)$ where $\lambda>n+1$; see the work of Dole\v{z}alov\'a and the authors in \cite{Dolezalova-Kangasniemi-Onninen_MGFD-cont}. 
	
	The limitations of the prior methods already become apparent in the case $\Sigma \in L^\infty(\Omega)$. Indeed, for bounded~$\Sigma$, the previously known continuity results for  planar solutions to \eqref{eq:generalized_finite_distortion} can be summarized as follows:
	\begin{itemize}
		\item If $\Sigma \equiv 0$, continuity holds for arbitrary $K$.
		\item If $\Sigma \equiv 1$, continuity fails for $K\in L_{\loc}^1(\Omega)$, is unknown for $K\in L_{\loc}^p(\Omega)$ with $1<p<\infty$, and holds if $\exp(\lambda K)\in L_{\loc}^1(\Omega)$ for $\lambda>3$.
	\end{itemize}
	A discontinuous planar solution to \eqref{eq:generalized_finite_distortion} with $K \in L^1(\Omega)$ and $\Sigma \in L^\infty(\Omega)$ was constructed in \cite{Dolezalova-Kangasniemi-Onninen_MGFD-cont}. More generally, for any exponents $p,q \in [1,\infty]$ with $p^{-1}+q^{-1}\ge 1$, there exists a discontinuous solution whenever $K \in L_{\loc}^p(\Omega)$ and $\Sigma/K \in L_{\loc}^q(\Omega)$; see \cite[Theorem 1.11]{Dolezalova-Kangasniemi-Onninen_MGFD-cont}. Despite the considerable gap between the known counterexamples and the reach of available techniques, it was conjectured in \cite[Conjecture 1.10]{Dolezalova-Kangasniemi-Onninen_MGFD-cont} that these integrability conditions are in fact sharp.
	
	With Theorem \ref{thm:continuity_general}, we are able to confirm this conjecture in full generality.
	\enlargethispage{-\baselineskip}%Pages 2 and 3 are shrunk by 1 line so Theorem 1.2 doesn't get cut off.
	
	\begin{restatable}{thm}{continuity}\label{thm:continuity}
		Let $n \ge 2$, let $\Omega \subset \R^n$ be open, and let $K \colon \Omega \to [1, \infty)$, $\Sigma \colon \Omega \to [0, \infty]$ be measurable. Suppose that $f \in W^{1,n}_\loc(\Omega, \R^n)$ satisfies the distortion inequality with defect \eqref{eq:generalized_finite_distortion}. If there exist $p, q \in [1, \infty]$ such that
		\[
			K \in L^p_\loc(\Omega)
			\quad \text{and} \quad
			\frac{\Sigma}{K} \in L^q_\loc(\Omega)
			\quad \text{with} \quad
			\frac{1}{p} + \frac{1}{q} < 1,
		\] 
		then $f$ has a continuous representative, with local modulus of continuity satisfying $\omega_f(r; x_0) = O(\log^{-1/n}(1/r))$ as $r \to 0$ at every $x_0 \in \Omega$.
	\end{restatable}

	In the only previously known case $p = \infty$, the solutions of \eqref{eq:generalized_finite_distortion} are H\"older continuous with critical H\"older exponent $\gamma = \min \left.(\norm{K}^{-1}_{L^\infty(\Omega)}, 1 - q^{-1}\right.)$; see \cite[Theorem 1.1]{Kangasniemi-Onninen_Heterogeneous} for details. When $p<\infty$, our estimate of the local modulus of continuity is already sharp in the class of mappings of finite distortion, as the map $f(x)= \log^{-1/n}\!\bigl(\abs{x}^{-1}\bigr) \abs{x}^{-1} x$ on the unit ball $\B^n \subset \R^n$  satisfies the  inequality~\eqref{eq:generalized_finite_distortion} with $\Sigma\equiv 0$ and $K \in L^{p}(\mathbb B^n)$ for all  ${p}\in [1, \infty)$. 
	
	\subsection{Quasiregular values and values of finite distortion} The inequality \eqref{eq:generalized_finite_distortion} is also closely related to the  recently developed theory of quasiregular values \cite{Kangasniemi-Onninen_Heterogeneous, Kangasniemi-Onninen_Heterogeneous-corrigendum, Kangasniemi-Onninen_1ptReshetnyak,Kangasniemi-Onninen_Picard,Kangasniemi-Onninen_Rescaling}. Given $K \in [1,\infty) $ and  $\Sigma \in L^1_{\loc}(\Omega) $, a mapping  $f \in W^{1,n}_{\loc}(\Omega,\R^n) $ is said to have a \emph{$(K, \Sigma)$-quasiregular value} at  $y_0 \in \R^n $ if
	\begin{equation}\label{eq:QR-value}
		\abs{Df(x)}^n \le K J_f(x) + |f(x)-y_0|^n \Sigma(x) \qquad \text{for a.e. } x \in \Omega.
	\end{equation}
 	This definition provides a pointwise notion of quasiregularity, and leads to single–value versions of classical results for quasiregular mappings, including Reshetnyak- and Rickman--Picard-type theorems. A core feature of the theory of quasiregular values is that if $f \in W^{1,n}_\loc(\Omega, \R^n)$ satisfies \eqref{eq:QR-value}, then the spherical logarithm $F \colon \Omega \to \R \times \S^{n-1}$ of $f - y_0$ obeys  $\abs{DF(x)}^n \le K J_F(x) + \Sigma(x)$ whenever applicable; see e.g.\ \cite[Section 4.1]{Kangasniemi-Onninen_Picard}. For this reason, arguments that can prove regularity properties of solutions of \eqref{eq:generalized_finite_distortion} are a central tool in the theory of quasiregular values. 

	Motivated by the theory of quasiregular values, we introduce the broader concept of \emph{values of finite distortion}. Specifically, if $K \colon \Omega \to [1, \infty)$ and $\Sigma \colon \Omega \to [0, \infty)$ are measurable, we say that $f$ has a \emph{value of finite distortion} with data $(K, \Sigma)$ at $y_0 \in \R^n$ if 
	\begin{equation}\label{eq:value_of_finite_distortion}
		\abs{Df(x)}^n \le K(x) J_f(x) + \abs{f(x) - y_0}^n \Sigma(x) \qquad \text{for a.e. } x \in \Omega.
	\end{equation}
	Theorem \ref{thm:continuity} then also yields the following continuity result for mappings which have a value of finite distortion. 		
	
	\begin{restatable}{cor}{continuityVFD}\label{cor:continuity_VFD}
		Let $n \ge 2$, let $\Omega \subset \R^n$ be open, let $y_0 \in \R^n$, and let $K \colon \Omega \to [1, \infty)$, $\Sigma \colon \Omega \to [0, \infty]$ be measurable. Suppose that $f \in W^{1,n}_\loc(\Omega, \R^n)$ has a value of finite distortion with data $(K, \Sigma)$ at $y_0$. If there exist $p, q \in [1, \infty]$ such that
		\[
			K \in L^p_\loc(\Omega)
			\quad \text{and} \quad
			\frac{\Sigma}{K} \in L^q_\loc(\Omega)
			\quad \text{with} \quad
			\frac{1}{p} + \frac{1}{q} < 1,
		\] 
		then $f$ has a continuous representative, with local modulus of continuity satisfying $\omega_f(r; x_0) = O(\log^{-1/n}(1/r))$ as $r \to 0$ at every $x_0 \in \Omega$.
	\end{restatable}
	
	Note that the same counterexamples as in \cite[Theorem 1.11]{Dolezalova-Kangasniemi-Onninen_MGFD-cont} show that  Corollary \ref{cor:continuity_VFD} is sharp on the $L^p$-scale.
	
	\subsection{The strategy of the proof}
	
	A powerful tool in the analysis of continuity properties of functions is the notion of monotonicity, introduced by  Lebesgue~\cite{Lebesgue_1907}. A continuous function $\varphi \colon \Omega \to \R$ is called \emph{monotone} if, for every open ball $B$ that is compactly contained in $\Omega$, the oscillation of $\varphi$ is attained on the boundary:
	\[
	\osc_{B} \varphi = \osc_{\partial B} \varphi. \]
	Equivalently, a monotone function $\varphi $ satisfies both the maximum and minimum principles:
	\[
	\inf_{B}\varphi  =  \min_{\partial B}\varphi 
	\leq  \sup_{B}\varphi =  \max_{\partial B}\varphi .\]
	The relevance of this principle for elliptic PDEs is immediate. 
	
	When dealing with very weak solutions to differential inequalities, such as mappings of finite distortion satisfying \eqref{eq:generalized_finite_distortion}   with $\Sigma \equiv 0$, one  needs to adopt a notion of so-called
	weakly monotone functions, due to  Manfredi~\cite{Manfredi_weakly}. Here, a Sobolev function $\phi \colon \Omega \to \R$ is called \emph{weakly monotone} if, for every ball $B$ that is compactly contained in $\Omega$ and all $M, m \in \R$ such that
	\[
	(\phi|_{B} - M)^+,  (m - \phi|_{B})^+ \in W^{1,1}_0(B),
	\]
	we have $(\phi\vert_{B} - M)^+ \equiv 0 \equiv (m - \phi\vert_B)^+$. A key observation is that a weakly monotone $W^{1,n}_\loc$-function always has a continuous Sobolev representative; see e.g.\ \cite[Theorem 2.21]{Hencl-Koskela-book}. The standard proof of the case $\Sigma \equiv 0$ of Theorem \ref{thm:continuity} is hence by showing that all coordinate functions $f_i$ of $f$ are weakly monotone; see e.g.\ \cite[Section 4]{Iwaniec-Koskela-Onninen-Invent} or  \cite[Theorem 2.17]{Hencl-Koskela-book} for the proof. 
	
	If $\Sigma \not\equiv 0$, however, coordinate functions of solutions to \eqref{eq:generalized_finite_distortion} need not be weakly monotone. This motivates us to introduce a further quantitative weakening of monotonicity: whereas a continuous monotone function admits no new extrema in the interior of a ball, we measure the failure of this principle by a H\"older-type defect of the form 
	\[ \osc_{B_r} \varphi \leq \osc_{\partial B_r} \varphi +Cr^\alpha \, .  \]
	For Sobolev functions, which are not necessarily continuous, this leads to the following  definition.
	
	\begin{defn}\label{def:weak_weak_monotone}
		Let $\Omega \subset \R^n$ be open and let $\phi \in W^{1,1}_\loc(\Omega)$. We say that $\phi$ is \emph{$\alpha$-almost weakly monotone} if there exist constants $C, \alpha > 0$ such that, whenever $B_r \subset \Omega$ is a ball of radius $r$ that is compactly contained in $\Omega$ and $M, m \in \R$ are such that
		\[
		(\varphi \vert_{B_r} - M)^+ \in W^{1,1}_0(B_r)
		\quad \text{and} \quad
		(m - \varphi \vert_{B_r})^+ \in W^{1,1}_0(B_r),
		\]
		then we have
		\[
		\norm{(\varphi \vert_{B_r} - M)^+}_{L^\infty(B_r)} \le Cr^{\alpha}
		\quad \text{and} \quad
		\norm{(m - \varphi \vert_{B_r})^+}_{L^\infty(B_r)} \le Cr^{\alpha}.
		\]
	\end{defn}
	
	As with weak monotonicity, $\alpha$-almost weakly monotone $W^{1,n}$-functions  have continuous representatives.
	
	\begin{prop}\label{prop:weak_weak_mono_cont}
		Let $n \ge 2$, let $\Omega \subset \R^n$ be open, and let $\phi \in W^{1,n}_\loc(\Omega)$. If $\phi$ is $\alpha$-almost weakly monotone for any $\alpha > 0$, then $\phi$ has a continuous representative, with local modulus of continuity satisfying $\omega_\phi(r; x_0) = O(\log^{-1/n}(1/r))$ as $r \to 0$ at every $x_0 \in \Omega$.
	\end{prop}
	
	The principal challenge is to show that the function $\phi$ in Theorem~\ref{thm:continuity_general} is almost weakly monotone. We achieve this by proving the following estimate.
	
	\begin{restatable}{thm}{Linftyestimate}\label{thm:weakly_vanishing_L_infinity_estimate}
		Let $n \ge 2$, let $\Omega \subset \R^n$ be open and bounded, and let $\phi \in W^{1,n}_0(\Omega)$ satisfy \eqref{eq:divergence_distortion_ineq}, where $K, A \colon \Omega \to [0, \infty)$ are measurable functions and $\xi \in L^{n/(n-1)}(\Omega, \R^n)$ with $\divg \xi = 0$ weakly. If $p, q \in [1, \infty]$ are such that $K \in L^p(\Omega)$ and $A \in L^q(\Omega)$ with $p^{-1} + q^{-1} < 1$, then
		\[
			\norm{\phi}_{L^\infty(\Omega)}^n 
			\le C(n,p,q) \norm{K}_{L^p(\Omega)} \norm{A}_{L^q(\Omega)}
			[m_n(\Omega)]^{1 - p^{-1} - q^{-1}}.
		\]
	\end{restatable}
	
	Notably, the critical tool that sets the proof of Theorem \ref{thm:weakly_vanishing_L_infinity_estimate} into motion is a non-standard Sobolev inequality that we call a \emph{superlevel Sobolev inequality}. Although the proof is not particularly difficult, the inequality appears to have been previously overlooked; we are not aware of any prior formulation in the literature. We expect this inequality to be of independent interest, with potential applications  beyond the present context. In the statement, we use the standard notation $\omega_n := m_n(\B^n)$ for the $n$-dimensional Lebesgue measure of the unit ball in $\R^n$.
	
	\begin{prop}\label{prop:distr_Sobolev_ineq}
		Let $n \ge 2$, let $\Omega \subset \R^n$ be open, and let $\phi \in W^{1,1}_0(\Omega)$ with $\phi \ge 0$ a.e.\ in $\Omega$. Then
		\[
			\norm{\phi}_{L^\infty(\Omega)} \le \frac{1}{n \omega_n^{1/n}} 
			\int_\Omega 
			\frac{\abs{\nabla \phi(x)}}{[m_n(\{ z \in \Omega : \phi(z) \ge \phi(x) \})]^{\frac{n-1}{n}}}
			\dd m_n(x).
		\]
	\end{prop}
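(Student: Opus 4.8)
The plan is to use the coarea formula together with the classical isoperimetric inequality, applied level by level to the function $\phi$. Let $\mu(t) := m_n(\{z \in \Omega : \phi(z) > t\})$ be the distribution function of $\phi$, and set $T := \norm{\phi}_{L^\infty(\Omega)}$, so that $\mu(t) > 0$ for every $t < T$. The key analytic input is the coarea formula $\int_\Omega \abs{\nabla\phi}\, g(\phi)\, dm_n = \int_0^\infty g(t)\, \mathcal{H}^{n-1}(\{\phi = t\})\, dt$ for any Borel $g \ge 0$, valid since $\phi \in W^{1,1}$. Choosing $g(t) = \mu(t)^{-(n-1)/n}$ (a legitimate nonnegative Borel function of $t$), the right-hand side of the asserted inequality becomes $\int_0^\infty \mathcal{H}^{n-1}(\{\phi = t\})\,\mu(t)^{-(n-1)/n}\, dt$. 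Here I use that $m_n(\{z : \phi(z) \ge \phi(x)\})$ and $m_n(\{z : \phi(z) > \phi(x)\})$ differ only on the set where $\phi(x)$ is an atom of the pushforward measure, which has measure zero after integrating against $\abs{\nabla\phi}\,dm_n$ (on such level sets $\nabla\phi = 0$ a.e.); so replacing $\ge$ by $>$ inside the integral is harmless.

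The second step is the isoperimetric inequality: for a.e.\ $t$, the superlevel set $\{\phi > t\}$ has finite perimeter with $\operatorname{per}(\{\phi > t\}) \le \mathcal{H}^{n-1}(\{\phi = t\})$ (the reduced boundary of a superlevel set sits inside the measure-theoretic level set), and the isoperimetric inequality in $\R^n$ gives $n\omega_n^{1/n}\, \mu(t)^{(n-1)/n} \le \operatorname{per}(\{\phi > t\})$. Combining, $n\omega_n^{1/n}\, \mu(t)^{(n-1)/n} \le \mathcal{H}^{n-1}(\{\phi = t\})$, hence $n\omega_n^{1/n} \le \mathcal{H}^{n-1}(\{\phi = t\})\,\mu(t)^{-(n-1)/n}$ for a.e.\ $t \in (0, T)$. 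Integrating this over $t \in (0,T)$ yields
\[
	n\omega_n^{1/n}\, T \le \int_0^T \mathcal{H}^{n-1}(\{\phi = t\})\, \mu(t)^{-(n-1)/n}\, dt \le \int_0^\infty \mathcal{H}^{n-1}(\{\phi = t\})\, \mu(t)^{-(n-1)/n}\, dt,
\]
and by the coarea identity the last integral equals $\int_\Omega \abs{\nabla\phi}\,[m_n(\{z : \phi(z) \ge \phi(x)\})]^{-(n-1)/n}\, dm_n$. Dividing by $n\omega_n^{1/n}$ gives exactly the claimed bound on $\norm{\phi}_{L^\infty(\Omega)} = T$.

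The main obstacle is purely technical rather than conceptual: one must make the coarea formula and the perimeter/isoperimetric estimates rigorous for a general $W^{1,1}_0(\Omega)$ function, in particular justifying that almost every superlevel set has finite perimeter and that its perimeter is controlled by $\mathcal{H}^{n-1}$ of the topological level set, and handling the measure-zero set of levels where $\phi$ fails to be "nice." All of this is standard BV/coarea theory (e.g.\ Maz'ya's proof of the Sobolev inequality via isoperimetry), so the argument should go through smoothly; the only genuinely new point is the bookkeeping that keeps the superlevel-set measure $\mu(\phi(x))$ inside the integrand instead of pulling out a single power of $\mu$. One should also note the degenerate cases: if $T = \infty$ the inequality is trivially interpreted (the right side must then be $+\infty$ by the same integration), and if $\phi \equiv 0$ both sides vanish.
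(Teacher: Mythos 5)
Your coarea-plus-isoperimetry route is the one the paper acknowledges as the easy proof for $\phi \in C^\infty_c(\Omega)$ and then deliberately sidesteps: the paper instead invokes the sharp $W^{1,1}$-Sobolev inequality (Lemma \ref{lem:sharp_1-Sobolev_ineq}), applies it to the truncation $\psi = \min(\max(\phi - a, 0), b - a)$ to obtain the incremental estimate $(b-a) \le \frac{1}{n\omega_n^{1/n}}\int_{\phi^{-1}(a,b)}\abs{\nabla\phi}\,[\mu_\phi^{+}(b)]^{-(n-1)/n}\,dm_n$ of Lemma \ref{lem:bound_difference_by_gradient}, and then sums these increments across a staircase partition of $[0,\norm{\phi}_{L^\infty})$ supplied by Corollary \ref{cor:distr_inv_approx}. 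That route stays entirely inside elementary Sobolev theory, with no perimeters, reduced boundaries, or BV coarea machinery needed.

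As written, your argument has a genuine gap at the coarea step. You state the coarea formula as $\int_\Omega g(\phi)\abs{\nabla\phi}\,dm_n = \int_0^\infty g(t)\,\cH^{n-1}(\{\phi = t\})\,dt$ and, separately, use the comparison $\operatorname{Per}(\{\phi > t\}) \le \cH^{n-1}(\{\phi = t\})$. But for a general $W^{1,1}$ function the Fleming--Rishel coarea formula reads $\int_\Omega g(\phi)\abs{\nabla\phi}\,dm_n = \int_0^\infty g(t)\operatorname{Per}(\{\phi > t\},\Omega)\,dt$, with the perimeter --- not $\cH^{n-1}$ of the topological level set --- on the right; the two can differ on a positive-measure set of levels, since for non-smooth $\phi$ the set $\{\phi = t\}$ can be far larger than the reduced boundary $\partial^*\{\phi>t\}$. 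Since your chain places these two quantities on opposite sides of inequalities, it is internally inconsistent as stated. The repair is to drop $\cH^{n-1}(\{\phi = t\})$ entirely and work with $\operatorname{Per}(\{\phi > t\})$ throughout: isoperimetry gives $n\omega_n^{1/n}\mu(t)^{(n-1)/n}\le\operatorname{Per}(\{\phi > t\},\R^n)$, which you can feed directly into the coarea identity after checking that the zero-extension of a $W^{1,1}_0(\Omega)$-function lies in $W^{1,1}(\R^n)$ (so that relative and absolute perimeters agree). Fixed this way your proof works and is conceptually attractive, but it calls on noticeably heavier background than the paper's, which rests on nothing beyond the classical $W^{1,1}$-Sobolev embedding and truncation of Sobolev functions.
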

	
	For us, the most significant feature of this superlevel Sobolev inequality is that the integrand in the upper bound is of the form $(F \circ \phi) \abs{\nabla \phi}$, where $F$ is a real function. This is critical, since it unlocks strategies based on level set integrals of null Lagrangians, which have been previously used to great effect in the study of quasiregular values; see e.g.\ \cite{Kangasniemi-Onninen_Heterogeneous}, \cite{Kangasniemi-Onninen_1ptReshetnyak}, and \cite{Kangasniemi-Onninen_Picard}.
	
	\subsection{The structure of this article} In Section \ref{sect:prelims}, we recall various necessary preliminary results from the theory of Sobolev functions. In Section \ref{sect:distr_funct}, we provide proofs of several integral estimates involving powers of distribution functions $\mu_\phi^{+}(t) = m_n(\phi^{-1}[t, \infty])$ and $\mu_\phi^{-}(t) = m_n(\phi^{-1}(t, \infty])$, including a proof of Proposition \ref{prop:distr_Sobolev_ineq}. In Section \ref{sect:almost_weak_mono_is_cont}, we prove Proposition \ref{prop:weak_weak_mono_cont}. Finally, in Section \ref{sect:main_result}, we complete the proofs of Theorems \ref{thm:weakly_vanishing_L_infinity_estimate}, \ref{thm:continuity_general}, and \ref{thm:continuity}.
	
	\subsection*{Acknowledgments} The initial ideas of this work were conceived during a visit by J.O.\ to the University of Cincinnati, near the end of I.K.'s appointment there; the authors thank the university for providing an environment conducive to collaboration.  We also thank Tero Kilpeläinen, Pekka Koskela, Pekka Pankka, Nageswari Shanmugalingam, Lubo\v{s} Pick, and Andrea Cianchi for helpful discussions and/or correspondence.
	
	\section{Preliminaries}\label{sect:prelims}
	
	Throughout this article, we use the notation $h^{+}$ and $h^{-}$ for the positive and negative parts of a real valued function $h \colon \Omega \to \R$. That is, $h^{+}, h^{-} \colon \Omega \to [0, \infty)$ are given by 
	\begin{align*}
		h^{+}(x) &= \max(h(x), 0)&\text{and}&&
		h^{-}(x) &= \max(-h(x), 0)
	\end{align*}
	for all $x \in \Omega$. In particular, $h = h^{+} - h^{-}$. The main exception to this are the distribution functions $\mu_\varphi^{+}$ and $\mu_\varphi^{-}$ defined in Section \ref{sect:distr_funct}, for which the superindices `$+$' and `$-$' are unrelated to this convention.
	
	\subsection{Piecewise constant approximations}
	
	We begin by recalling a basic approximation result which we call the Staircase Lemma. It allows one to uniformly approximate a non-decreasing left-continuous function with a staircase of constant functions; see Figure \ref{fig:staircase_lemma} for an illustration.
	
	\begin{figure}[h]\label{fig:staircase_lemma}
		\begin{center}\begin{tikzpicture}[scale=0.9]
			\begin{scope}
				\draw[->] (-0.1, 0) -- (6, 0);
				\draw[->] (0, -0.1) -- (0, 6);
				
				\draw plot [smooth, tension=0.5] coordinates {(0,0) (1, 1) (1.5, 1.15) (2, 2) (2.1, 2.5) (2.4, 3)};
				\draw [-] (2.4, 4) -- (3.5, 4);
				\draw plot [smooth, tension=0.5] coordinates {(3.5,4.3) (3.7, 4.5) (3.8, 4.9) (4.7, 5)
												(5.3, 5.5)};
				\draw[dashed] (5.3,5.5) -- (5.8, 6);
				
				\fill (2.4, 3) circle (1.2pt);
				\fill (3.5, 4) circle (1.2pt);
			\end{scope}
			
			\begin{scope}[shift={(7.5,0)}]
				\draw[->] (-0.1, 0) -- (6, 0);
				\draw[->] (0, -0.1) -- (0, 6);
				
				\draw[dotted] plot [smooth, tension=0.5] coordinates {(0,0) (1, 1) (1.5, 1.15) (2, 2) (2.1, 2.5) (2.4, 3)};
				\draw[dotted] (2.4, 4) -- (3.5, 4);
				\draw[dotted] plot [smooth, tension=0.5] coordinates {(3.5,4.3) (3.7, 4.5) (3.8, 4.9) (4.7, 5)
					(5.3, 5.5)};
				\draw[dotted] (5.3,5.5) -- (5.8, 6);
				
				\draw[-] (0, 0.9) -- (0.9, 0.9);
				\draw[-] (0.9, 1.8) -- (1.9, 1.8);
				\draw[-] (1.9, 2.7) -- (2.2, 2.7);
				\draw[-] (2.2, 3) -- (2.4, 3);
				\draw[-] (2.4, 4.5) -- (3.7, 4.5);
				\draw[-] (3.7, 5.4) -- (5.2, 5.4);
				
				\fill (0, 0) circle (1.2pt);
				\fill (0.9, 0.9) circle (1.2pt);
				\fill (1.9, 1.8) circle (1.2pt);
				\fill (2.2, 2.7) circle (1.2pt);
				\fill (2.4, 3) circle (1.2pt);
				\fill (3.7, 4.5) circle (1.2pt);
				\fill (5.2, 5.4) circle (1.2pt);
				
				\draw[-] (-0.4, 0) -- (-0.4, 0.45) node [anchor=east] {$\eps$} -- (-0.4, 0.9);
				\draw[dashed] (-0.4,0.9) -- (-0.4, 6);
				\draw[-] (-0.5, 0) -- (-0.3, 0);
				\draw[-] (-0.5, 0.9) -- (-0.3, 0.9);
				\draw[-] (-0.5, 1.8) -- (-0.3, 1.8);
				\draw[-] (-0.5, 2.7) -- (-0.3, 2.7);
				\draw[-] (-0.5, 3.6) -- (-0.3, 3.6);
				\draw[-] (-0.5, 4.5) -- (-0.3, 4.5);
				\draw[-] (-0.5, 5.4) -- (-0.3, 5.4);
			\end{scope}
		\end{tikzpicture}\end{center}
		\caption{\small A non-decreasing left-continuous function, along with one of its staircase approximations.}
	\end{figure}
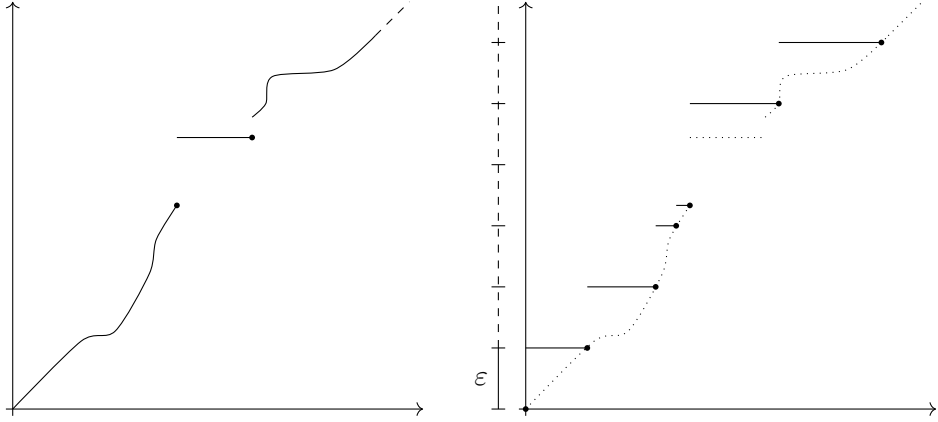
	
	\begin{lemma}[Staircase Lemma]\label{lem:left_cont_approx}
		Let $F \colon [0, \infty] \to [0, \infty]$ be left-continuous and non-decreasing. Let $s := \sup \{ t \in [0, \infty] : F(t) < F(\infty)\}$. If $s > 0$, then for every $\eps > 0$, there exists an increasing sequence $t_0 < t_1 < t_2 < \dots$ of elements of $[0, s)$ for which $t_0 = 0$, $\lim_{i \to \infty} t_i = s$, and for every $i \in \Z_{>0}$ and all $t \in (t_{i-1}, t_i]$, we have $\abs{F(t_i) - F(t)} \le \eps$.
	\end{lemma}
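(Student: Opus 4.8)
\emph{Proof plan.} The plan is a greedy construction: I would build the sequence $0 = t_0 < t_1 < t_2 < \dots$ in $[0,s)$ by always taking the largest step allowed by the $\eps$-bound, and then show that this greedy rule forces the $t_i$ to accumulate at $s$. Two preliminary reductions simplify the target estimate. First, since $s > 0$ and $\{t : F(t) < F(\infty)\}$ contains $[0,t]$ whenever it contains $t$, this set contains $[0,s)$; hence $F$ is finite on $[0,s)$, and therefore the right-hand limit $F(a^+) := \inf_{\tau > a} F(\tau) = \lim_{\tau \to a^+} F(\tau)$, which exists by monotonicity, is finite for every $a \in [0,s)$. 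Second, for $t \in (t_{i-1}, t_i]$ monotonicity gives $F(t_{i-1}^+) \le F(t) \le F(t_i)$, so the requirement $\abs{F(t_i) - F(t)} \le \eps$ for all such $t$ follows from the single inequality $F(t_i) - F(t_{i-1}^+) \le \eps$. It thus suffices to produce a strictly increasing sequence $0 = t_0 < t_1 < \dots$ in $[0,s)$ with $t_i \to s$ and $F(t_i) - F(t_{i-1}^+) \le \eps$ for all $i \ge 1$.

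For the construction, put $t_0 = 0$ and, given $t_{i-1} \in [0,s)$, set $\ell_i := F(t_{i-1}^+)$ and
\[
	G_i := \{\, t \in (t_{i-1}, s) : F(t) \le \ell_i + \eps \,\}.
\]
Since $\ell_i = \inf_{\tau > t_{i-1}} F(\tau)$, there is $\tau > t_{i-1}$ with $F(\tau) < \ell_i + \eps$, and replacing $\tau$ by any point of the nonempty interval $(t_{i-1}, s)$ below it shows $G_i \neq \emptyset$; by monotonicity $G_i$ is an interval with left endpoint $t_{i-1}$. Let $\sigma_i := \sup G_i \in (t_{i-1}, s]$. If $\sigma_i < s$, then $F(t) \le \ell_i + \eps$ for every $t \in (t_{i-1}, \sigma_i)$, so \emph{left-continuity} of $F$ gives $F(\sigma_i) = \lim_{t \to \sigma_i^-} F(t) \le \ell_i + \eps$, i.e.\ $\sigma_i \in G_i$; in this case I set $t_i := \sigma_i$. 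If $\sigma_i = s$, I instead choose $t_i$ to be any point of $(t_{i-1}, s)$ with $s - t_i$ as small as I wish, say $s - t_i \le 2^{-i}(s - t_{i-1})$, or $t_i \ge i$ when $s = \infty$. In either case $t_i \in G_i$, which gives $F(t_i) - F(t_{i-1}^+) \le \eps$, as needed.

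It remains to prove $t_i \to s$, which I would do through a dichotomy. Note first that $i \mapsto F(t_{i-1}^+)$ is non-decreasing, since $t_i > t_{i-1}$ implies $\inf_{\tau > t_{i-1}} F(\tau) \le \inf_{\tau > t_i} F(\tau)$. If $\sigma_i = s$ at some step $i$, then $G_i = (t_{i-1}, s)$, and for the next step the inclusion $(t_i, s) \subseteq (t_{i-1}, s) = G_i$ together with $\ell_i \le \ell_{i+1}$ forces $G_{i+1} = (t_i, s)$ as well; hence $\sigma_j = s$ for all $j \ge i$, and from that step on the $t_j$ converge to $s$ by our choice. In the remaining case $\sigma_i < s$ for every $i$: then every $t \in (\sigma_i, s)$ lies outside $G_i$, hence $F(t) > \ell_i + \eps$ there, so $\ell_{i+1} = F(\sigma_i^+) \ge \ell_i + \eps$, and iterating gives $\ell_i \ge \ell_1 + (i-1)\eps \to \infty$. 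If the strictly increasing sequence $t_i$ had a limit $t^* < s$, then for each $i$ a point $u$ of the nonempty interval $(t_{i-1}, t^*)$ would yield $\ell_i = F(t_{i-1}^+) \le F(u) \le F(t^*) < \infty$, a contradiction; hence $t^* = s$ in this case too. The main obstacle is exactly this convergence argument — recognising that the greedy choice cannot stall below $s$, because stalling would make the finite quantities $F(t_{i-1}^+)$ diverge; a second, indispensable point is the use of left-continuity to guarantee that $\sup G_i$ is attained, so that a step can land precisely on a jump of $F$, which is what genuinely distinguishes the left-continuous case (the statement fails for right-continuous $F$).
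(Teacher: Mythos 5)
Your proposal is correct, and it takes a genuinely different route from the paper. The paper defines the partition points non-recursively, by slicing the \emph{range} of $F$ with fixed thresholds: $t'_i := \sup\{t \in [0,s] : F(t) \le F(0) + i\eps\}$, so that each step corresponds to one $\eps$-band of $F$-values, any repeated $t'_i$ coming from large jumps are simply skipped, and convergence $t'_i \to s$ follows either because some $t'_k$ already equals $s$ (when $F(\infty) < \infty$) or because $F$ is finite on $[0,s)$ while the thresholds $F(0)+i\eps$ diverge. You instead build the sequence greedily, resetting the threshold to $F(t_{i-1}^+)+\eps$ at each step and advancing as far as that allows; left-continuity enters both proofs at the same juncture, namely to ensure the supremum defining the next step is actually attained so that $F$ does not overshoot. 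Your reduction to the single inequality $F(t_i) - F(t_{i-1}^+) \le \eps$ is clean, and your convergence dichotomy is valid: either some $G_i$ covers all of $(t_{i-1},s)$, after which all later $G_j$ do too and you can steer $t_j \to s$ explicitly, or $\ell_i$ gains at least $\eps$ per step and diverges, which is incompatible with the $t_i$ stalling below $s$ since $F$ is finite there. Roughly speaking, the greedy construction produces a strictly increasing sequence directly and avoids any repeat-skipping, at the price of a slightly longer convergence argument, while the paper's fixed-threshold construction is shorter to state. Both are correct; there is no gap.
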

	\begin{proof}
		Fix $\eps > 0$, and let $t_0 = 0$. For every $i \in \Z_{> 0}$, we let
		\[
			t'_i = \sup \{ t \in [0, s] : F(t) \le F(0) + i\eps\},
		\]
		where we additionally denote $t_0' = 0$. Note that the set we are taking a supremum over in the definition of $t_i'$ is non-empty, and that $t_i'$ form a non-decreasing sequence. Since $F$ is left-continuous, it hence follows that $F(t'_i) \le F(0) + i\eps$. Consequently, since $F$ is non-decreasing, we have for all $i \in \Z_{> 0}$ that $F(0) + (i-1)\eps < F(t) \le F(0) + i\eps$ when $t \in (t_{i-1}', t_i']$. In particular, $\abs{F(t) - F(t')} \le \eps$ for all $t, t' \in (t_{i-1}', t_i']$.
		
		Now, we split into two cases. In the first case, we have $t_i' < s$ for all $i$. This is only possible if $F(\infty) = \infty$, since otherwise we would have $F(\infty) \le F(0) + i\eps$ for some $i \in \Z_{> 0}$, at which point $t'_i = s$. We claim that in this case $\lim_{i \to \infty} t_i' = s$. Indeed, otherwise $\lim_{i \to \infty} t_i' < s$, and we may select a $t \in (\lim_{i \to \infty} t_i', s)$; we then have $F(t) > F(0) + i\eps$ for all $i$ implying that $F(t) = \infty$, which contradicts the fact that $F(t) < F(\infty) = \infty$ due to $t < s$. Thus, $\lim_{i \to \infty} t_i' = s$, and the claim follows if we select $t_i$ to be the numbers $t_i'$ with any repeats skipped.
		
		The other case is that $t_k' = s$ for some $k \in \Z_{> 0}$. Suppose that $k$ is the first such index, implying that $s > t_{k-1}'$. Now we again select $t_i$ to be the numbers $t'_i$ with repeats skipped, up to $t'_{k-1}$. Then, for the remaining $t_i$, we select them to be any increasing sequence in $(t_{k-1}', s)$ tending to $s$, and the claim follows.
	\end{proof}
	
	\subsection{Sobolev functions}
	
	Let $\Omega \subset \R^n$ be open, and let $p \in [1, \infty)$. We use $W^{1,p}(\Omega)$ to denote the standard Sobolev space on $\Omega$, and similarly use $W^{1,p}_\loc(\Omega)$ to denote the space of functions that are locally in a $W^{1,p}$-space. We recall that the Sobolev space of functions vanishing on the boundary $W^{1,p}_0(\Omega)$ is the $W^{1,p}(\Omega)$-closure of the space $C^\infty_c(\Omega)$ of smooth compactly supported functions. We also use $W^{1,p}_c(\Omega)$ to denote the space of $W^{1,p}(\Omega)$-functions with compact support in $\Omega$. By a convolution approximation argument, it is clear that $W^{1,p}_c(\Omega) \subset W^{1,n}_0(\Omega)$. Since also $C^\infty_c(\Omega) \subset W^{1,p}_c(\Omega)$, the space $W^{1,n}_0(\Omega)$ is equivalently the $W^{1,p}(\Omega)$-closure of $W^{1,p}_c(\Omega)$.
	
	We recall some basic properties of truncations of Sobolev functions. For $\phi \in W^{1,p}(\Omega)$, and $a \in \R$, if we set $u = \max(\phi, a)$, then $u \in W^{1,p}_\loc(\Omega)$, with $\nabla u(x) = \nabla \phi(x)$ for a.e.\ $x \in \phi^{-1} (-\infty, a)$ and $\nabla u(x) = 0$ for a.e.\ $x \in \phi^{-1}[a, \infty)$; see e.g.\ \cite[Theorem 1.20]{Heinonen-Kilpelainen-Martio_book}. In particular, $u \in W^{1,p}(\Omega)$ if $u \in L^p(\Omega)$, which occurs for instance if $\Omega$ is bounded or $a \le 0$. Analogous facts hold for $u = \min(\phi, a)$. 
	
	Moreover, if $\phi \in W^{1,p}_0(\Omega)$ and $\psi \in W^{1,p}(\Omega)$ are such that $\abs{\psi} \le \abs{\phi}$ a.e.\ in $\Omega$, then $\psi \in W^{1,p}_0(\Omega)$; see e.g.\ \cite[Lemma 1.25 (iii)]{Heinonen-Kilpelainen-Martio_book}. A notable consequence of these facts is that if $\phi \in W^{1,p}_0(\Omega)$, then for every $M \in [0, \infty)$, we have $(\phi - M)^+ \in W^{1,p}_0(\Omega)$. We also point out that, given $\phi \in W^{1,p}(\Omega)$ and $a \in \R$, we have $\nabla\phi(x) = 0$ for a.e.\ $x \in \phi^{-1}\{a\}$; see e.g.\ \cite[Corollary 1.21]{Heinonen-Kilpelainen-Martio_book}. 
	
	We then recall a result that allows us to select a representative of a $W^{1,p}$-function that behaves well on spheres around a fixed point.
	
	\begin{lemma}\label{lem:good_representative_lemma}
		Let $n \ge 2$, let $\Omega \subset \R^n$ be open, and let $\phi \in W^{1,p}(\Omega)$ with $p \in (n-1, \infty)$. Let $x_0 \in \Omega$, and let $R > 0$ be such that $\B^n(x_0, R) \subset \Omega$. Then there exists a representative $\tilde{\phi}$ of $\phi$ such that, for a.e.\ $r \in (0, R)$, the following properties hold:
		\begin{enumerate}
			\item \label{enum:repr_cont} $\tilde{\phi} \vert_{\S^{n-1}(x_0, r)}$ is continuous;
			\item \label{enum:repr_Morrey} for all $x, y \in \S^{n-1}(x_0, r)$, we have
			\[
				\abs{\tilde{\phi}(x) - \tilde{\phi}(y)}^p \le C(n, p) r^{p-(n-1)}
				\int_{\S^{n-1}(x_0, r)} \abs{\nabla \phi}^p \dd \cH^{n-1};
			\]
			\item \label{enum:repr_truncs} if $m, M \in \R$ are such that $m \le \tilde{\phi}(x) \le M$ for all $x \in \S^{n-1}(x_0, r)$, then
			\begin{align*}
				(\phi\vert_{\B^n(x_0, r)} -M )^{+}
				&\in W^{1,p}_0(\B^n(x_0, r))\quad\text{and}\\
				( m	- \phi\vert_{\B^n(x_0, r)} )^{+}
				&\in W^{1,p}_0(\B^n(x_0, r)).\\
			\end{align*}
		\end{enumerate}
	\end{lemma}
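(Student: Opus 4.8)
The plan is to build $\tilde\phi$ from the restrictions of $\phi$ to the spheres $\S^{n-1}(x_0,r)$ and to use that, since $p > n-1$, a $W^{1,p}$-function on an $(n-1)$-dimensional sphere has a H\"older continuous representative by Morrey's embedding. Fix $R' \in (0,R)$; since $\overline{\B^n(x_0,R')}$ is compactly contained in $\Omega$, choose smooth $\phi_j \to \phi$ in $W^{1,p}$ on a neighbourhood of this closed ball, and pass to a subsequence so that $\phi_j \to \phi$ a.e. By the spherical Fubini identity
\[
  \int_0^{R'} \int_{\S^{n-1}(x_0, r)} \bigl( |\phi_j - \phi|^p + |\nabla \phi_j - \nabla\phi|^p \bigr) \dd\cH^{n-1} \dd r
  = \| \phi_j - \phi \|_{W^{1,p}(\B^n(x_0, R'))}^p \longrightarrow 0,
\]
after a further subsequence we obtain, for a.e.\ $r \in (0, R')$, that $\phi_j|_{\S^{n-1}(x_0,r)} \to \phi|_{\S^{n-1}(x_0,r)}$ in $L^p(\S^{n-1}(x_0,r))$ and that the tangential gradients converge in $L^p(\S^{n-1}(x_0,r))$; in particular $\phi|_{\S^{n-1}(x_0,r)} \in W^{1,p}(\S^{n-1}(x_0,r))$ with weak gradient $(\nabla\phi)_{\mathrm{tan}}$.

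For each such $r$, I would rescale the unit sphere by the factor $r$: Morrey's inequality on $\S^{n-1}$ together with the scaling of the tangential gradient gives, for the smooth $\phi_j$,
\[
  |\phi_j(x) - \phi_j(y)|^p \le C(n, p)\, r^{p - (n-1)} \int_{\S^{n-1}(x_0, r)} |\nabla \phi_j|^p \dd\cH^{n-1}
  \qquad \text{for all } x, y \in \S^{n-1}(x_0, r),
\]
where I used $|\nabla_{\mathrm{tan}}\phi_j| \le |\nabla\phi_j|$. Hence $(\phi_j|_{\S^{n-1}(x_0,r)})_j$ is uniformly Cauchy and converges uniformly to a continuous $g_r$, which agrees $\cH^{n-1}$-a.e.\ on $\S^{n-1}(x_0,r)$ with $\phi$ since it is also the $L^p$-limit. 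Setting $\tilde\phi(x) := g_{|x - x_0|}(x)$ on the full-measure set of good spheres and $\tilde\phi := \phi$ elsewhere yields, by Fubini, a representative of $\phi$; as the construction depends on $R'$ only through a null set of radii, letting $R' \uparrow R$ produces a single such $\tilde\phi$ on $\Omega$ valid for a.e.\ $r \in (0,R)$. Properties \ref{enum:repr_cont} and \ref{enum:repr_Morrey} then follow by letting $j \to \infty$ in the last display, using uniform convergence on the left and $W^{1,p}(\S^{n-1}(x_0,r))$-convergence on the right.

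For \ref{enum:repr_truncs}, fix a good radius $r$, write $B := \B^n(x_0,r)$, and suppose $m \le \tilde\phi \le M$ on $\S^{n-1}(x_0,r)$; I will treat $(\phi|_B - M)^+$, the other case being the same statement applied to $-\phi$. Given $\eps > 0$, uniform convergence on $\S^{n-1}(x_0,r)$ gives $\phi_j < M + \eps$ on $\S^{n-1}(x_0,r)$ for all large $j$; as $\phi_j$ is continuous, $\{ \phi_j \ge M + \eps \} \cap \overline{B}$ is then a compact subset of the open ball $B$, so the Lipschitz truncation $(\phi_j - M - \eps)^+|_B$ has compact support in $B$ and thus lies in $W^{1,p}_0(B)$. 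Sending $j \to \infty$, one checks $(\phi_j - M - \eps)^+ \to (\phi|_B - M - \eps)^+$ in $W^{1,p}(B)$ by a routine truncation argument: for the gradients, pass to a further subsequence with a.e.\ convergence, split $B$ into $\{\phi \ne M + \eps\}$ and $\{\phi = M + \eps\}$, use the preliminary fact that $\nabla\phi = 0$ a.e.\ on the latter, and dominate by $2^{p-1}(|\nabla\phi_j|^p + |\nabla\phi|^p)$. Hence $(\phi|_B - M - \eps)^+ \in W^{1,p}_0(B)$; letting $\eps \downarrow 0$, so that $\nabla(\phi - M - \eps)^+ = \nabla\phi\,\mathbf{1}_{\{\phi > M + \eps\}} \to \nabla\phi\,\mathbf{1}_{\{\phi > M\}}$ in $L^p(B)$ by dominated convergence, and using that $W^{1,p}_0(B)$ is closed gives $(\phi|_B - M)^+ \in W^{1,p}_0(B)$.

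The main obstacle is organizational rather than conceptual: one must produce a \emph{single} representative $\tilde\phi$ on $\Omega$ whose restriction to $\S^{n-1}(x_0,r)$ is simultaneously continuous and Morrey-controlled for almost every $r$, with the Morrey constant correctly scaled in $r$; this forces the careful use of spherical Fubini and of the scaling behaviour of the tangential gradient. Once $\tilde\phi$ is in hand, \ref{enum:repr_truncs} is a standard approximation argument whose only delicate point is the behaviour of the gradients of the truncations on the level set $\{\phi = M + \eps\}$, which is neutralized by the preliminary fact that Sobolev gradients vanish a.e.\ on level sets.
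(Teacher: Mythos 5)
Your proof follows essentially the same route as the paper: smooth approximation, spherical Fubini, Morrey's inequality on a.e.\ sphere to produce continuous traces, define $\tilde\phi$ from the uniform limits, and obtain the $W^{1,p}_0$-membership in part (3) by approximating $(\phi-M-\eps)^+$ with compactly supported truncations of the smooth approximants before letting $\eps\downarrow 0$. Two small remarks: the displayed Morrey bound controls only the oscillation of a single $\phi_j$, so ``hence uniformly Cauchy'' silently uses the same Morrey inequality applied to the differences $\phi_i-\phi_j$ together with the $L^p(\S^{n-1}(x_0,r))$-convergence of the traces (the paper writes this out explicitly via a bound on $\min_{S(r)}|\phi_{i,r}-\phi_{j,r}|$); and since $\phi\in W^{1,p}(\Omega)$ globally, Meyers--Serrin gives $\phi_j\in C^\infty(\Omega)$ converging in $W^{1,p}(\Omega)$, so the $R'$-exhaustion and the somewhat hand-wavy gluing step ``letting $R'\uparrow R$'' are unnecessary complications.
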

	\begin{proof}
		We adopt the shorthands $B(r) = \B^n(x_0, r)$ and $S(r) = \S^{n-1}(x_0, r)$ for $r > 0$.
		We select a sequence of $\phi_j \in C^\infty(\Omega)$ that converge to $\phi$ in $W^{1,p}(\Omega)$. It follows that for a.e. $r \in (0, R)$, we may pass to a subsequence $\phi_{j, r}\in C^\infty(\Omega)$ such that
		\begin{equation}\label{eq:spherical_Sobolev_conv}
			\lim_{j \to \infty} \int_{S(r)} \bigl( \abs{\phi - \phi_{j,r}}^p + \abs{\nabla \phi - \nabla \phi_{j,r}}^p \bigr) \dd \cH^{n-1}
			= 0. 
		\end{equation}
		For every $r$ such that the above holds, since $p > n-1$, we may use Morrey's inequality on spheres, see e.g.\ \cite[Lemma 2.19]{Hencl-Koskela-book}, to conclude that for all $x, y \in S(r)$ and all pairs of indices $i, j$, we have
		\begin{equation}\label{eq:Morrey_smooth}
			\abs{\phi_{j,r}(x) - \phi_{j,r}(y)}^p \le C(n, p) r^{p-(n-1)}
			\int_{S(r)} \abs{\nabla \phi_{j,r}}^p \dd \cH^{n-1}
		\end{equation}
		and
		\begin{multline}\label{eq:Morrey_smooth_differences}
			\big\lvert\abs{\phi_{i,r}(x) - \phi_{j,r}(x)} - \abs{\phi_{i,r}(y) - \phi_{j,r}(y)}\big\rvert^p\\
			\le C(n, p) r^{p-(n-1)}
			\int_{S(r)} \abs{\nabla \phi_{i,r} - \nabla \phi_{j,r}}^p \dd \cH^{n-1} 
		\end{multline}
		
		Since $\phi_{j,r}\vert_{S(r)}$ and $(\nabla \phi_{j,r})\vert_{S(r)}$ are both $L^p$-convergent, by combining \eqref{eq:Morrey_smooth_differences} with the trivial estimate
		\[
			\min_{x \in S(r)} \abs{\phi_{i,r}(x) - \phi_{j,r}(x)}^p 
			\le \frac{1}{\cH^{n-1}(S(r))} \int_{S(r)} \abs{\phi_{i,r} - \phi_{j,r}}^p \dd \cH^{n-1},
		\] 
		it follows that $\phi_{j,r}\vert_{S(r)}$ is a uniformly Cauchy sequence of continuous functions. Thus, it has a continuous uniform limit, which we select as the values of our $\tilde{\phi}\vert_{S(r)}$, ensuring that \eqref{enum:repr_cont} holds. 
		
		Since $\phi_{j,r}\vert_{S(r)}$ converges to $\phi\vert_{S(r)}$ in the $L^p$-norm, $\phi\vert_{S(r)}$ and $\tilde{\phi}\vert_{S(r)}$ differ only in a $\cH^{n-1}$-nullset. Thus, when we apply this definition of $\tilde{\phi}$ at a.e.\ $r \in (0, R)$ and set $\tilde{\phi} = \phi$ everywhere else, the resulting $\tilde{\phi}$ only differs from $\phi$ in a Lebesgue nullset. We also observe that $\tilde{\phi}$ satisfies condition \eqref{enum:repr_Morrey} by passing to the limit in \eqref{eq:Morrey_smooth}.
		
		It remains to verify condition \eqref{enum:repr_truncs}. Let $r$ be one of the radii where \eqref{eq:spherical_Sobolev_conv} holds, and suppose that $m \le \tilde{\phi}(x) \le M$ for all $x \in \S^{n-1}(x_0, r)$. We show the claim for $(\phi\vert_{B(r)} -M )^{+}$, as the proof for $( m	- \phi\vert_{B(r)} )^{+}$ is analogous. 
		
		For this, let $\eps > 0$, and observe that for large enough $j$, uniform convergence on the boundary ensures that $(\phi_{j,r}\vert_{B(r)} - (M + \eps))^+ \in W^{1,p}_c(B(r))$. 
		Since $\phi_{j,r} \to \phi$ in $W^{1,p}(\Omega)$, we may use e.g.\ \cite[Lemma 1.22]{Heinonen-Kilpelainen-Martio_book} to conclude that $(\phi_{j,r}\vert_{B(r)} - (M + \eps))^+$ converges to $(\phi\vert_{B(r)} - (M + \eps))^+$ in $W^{1,p}(B(r))$. Thus, $(\phi\vert_{B(r)} - (M + \eps))^+ \in W_0^{1,p}(B(r))$ for every $\eps > 0$. Finally, we observe that $(\phi\vert_{B(r)} - (M + \eps))^+ \to (\phi\vert_{B(r)} - M)^+$ in $W^{1,p}(B(r))$ as $\eps \to 0$, completing the proof that $(\phi\vert_{B(r)} - M)^+ \in W^{1,n}_0(B(r))$.
	\end{proof}

	\subsection{Integrals and level set methods}
	
	We define for vector fields $\xi \in L^1_\loc(\Omega, \R^n)$ that a function $\divg \xi \in L^1_\loc(\Omega)$ is a \emph{weak divergence} of $\xi$ if
	\begin{equation}\label{eq:weak_divergence}
		\int_\Omega \ip{\xi}{\nabla \eta} \dd m_n = - \int_\Omega \eta \divg \xi \dd m_n
	\end{equation}
	for all $\eta \in C^\infty_c(\Omega)$. Thus, we have $\divg \xi = 0$ weakly if and only if the left hand side of \eqref{eq:weak_divergence} vanishes for all $\eta \in C^\infty_c(\Omega)$. We then recall the following standard consequence of this definition. 
	
	\begin{lemma}\label{lem:Jacobian_zero_integral}
		Let $\Omega \subset \R^n$ be open, let $\xi \in L^{n/(n-1)}(\Omega, \R^n)$ be such that $\divg \xi = 0$ weakly, and let $\phi \in W^{1,n}_0(\Omega)$. Then
		\[
			\int_\Omega \ip{\nabla \phi}{\xi} = 0
		\]
	\end{lemma}
	\begin{proof}
		We approximate $\phi$ in the $W^{1,n}$-norm with $\phi_j \in C^\infty_c(\Omega)$, and obtain that
		\[
			\abs{\int_\Omega \ip{\nabla \phi}{\xi}}
			\le \norm{\nabla\phi-\nabla\phi_j}_{L^n(\Omega)} \norm{\xi}_{L^\frac{n}{n-1}(\Omega)}
			+ \abs{\int_\Omega \phi_j \divg \xi}
		\]
		for all $j$, where the first term tends to 0 as $j \to \infty$ and the second term vanishes since $\divg \xi = 0$.
	\end{proof}
	
	Next, we generalize Lemma \ref{lem:Jacobian_zero_integral} to the case where $\ip{\nabla \phi}{\xi}$ is weighted with a function $F(\phi)$. Analogous ideas have been used previously in the study of quasiregular values; see e.g.\ \cite[Lemma 5.3]{Kangasniemi-Onninen_Heterogeneous} or \cite[Lemma 5.2]{Kangasniemi-Onninen_Picard}. However, the specific result our proof requires has a relatively badly-behaved coefficient function $F$, which necessitates a slightly different proof than in the aforementioned articles. Note that in the following theorem, $\ip{\nabla \phi}{\xi}^{+}$ and $\ip{\nabla \phi}{\xi}^{-}$ stand for the positive and negative parts of the function $\ip{\nabla \phi}{\xi}$, respectively.
	
	\begin{lemma}\label{lem:Jacobian_zero_integral_with_F}
		Let $\Omega \subset \R^n$ be open, let $\phi \in W^{1,n}_0(\Omega)$, let $\xi \in L^{n/(n-1)}(\Omega, \R^n)$ be such that $\divg \xi = 0$ weakly, and let $F \colon [0, \infty] \to [0, \infty]$ be a non-decreasing left-continuous function. Suppose that $F(\abs{\phi(x)}) < \infty$ for a.e.\ $x \in \Omega$, and 
		\begin{equation}\label{eq:finite_integral_assumption}
			\int_\Omega F(\abs{\phi}) \ip{\nabla \phi}{\xi}^{+} < \infty
			\quad \text{or} \quad
			\int_\Omega F(\abs{\phi}) \ip{\nabla \phi}{\xi}^{-} < \infty.
		\end{equation}
		Then $F(\abs{\phi}) \ip{\nabla \phi}{\xi} \in L^1(\Omega)$, and
		\[
			\int_\Omega F(\abs{\phi}) \ip{\nabla \phi}{\xi} = 0.
		\]
	\end{lemma}
	\begin{proof}
		The claim follows by showing that 
		\begin{equation}\label{eq:J_pos_neg_part_equality}
			\int_\Omega F(\abs{\phi}) \ip{\nabla \phi}{\xi}^{+}
			= \int_\Omega F(\abs{\phi}) \ip{\nabla \phi}{\xi}^{-}.
		\end{equation}
		Indeed, since one of the integrals in \eqref{eq:J_pos_neg_part_equality} is finite by assumption, both of the integrals are then finite if \eqref{eq:J_pos_neg_part_equality} holds. Thus, it follows that $F(\abs{\phi}) \ip{\nabla \phi}{\xi} = F(\abs{\phi}) (\ip{\nabla \phi}{\xi}^{+} - \ip{\nabla \phi}{\xi}^{-})$ is integrable over $\Omega$ and has zero integral over $\Omega$.
		
		To prove \eqref{eq:J_pos_neg_part_equality}, we first let $s := \sup \{t \in [0, \infty] : F(t) < F(\infty)\}$ and fix $\eps > 0$. If $s > 0$, we use the Staircase Lemma \ref{lem:left_cont_approx} to find $ 0 = t_0 < t_1 < t_2 < \dots$ with $\lim_{k \to \infty} t_k = s$ and $\abs{F(t_{k+1}) - F(t)} \le \eps$ for all $t \in (t_{k}, t_{k+1}]$, $k \in \Z_{\ge 0}$. In the case $s = 0$, we set $t_i = 0$ for all $i$. We then define a piecewise constant staircase approximation $G \colon [0, \infty] \to [0, \infty]$ of $F$ by setting $G(0) = F(0)$, $G(t) = F(t_{k+1})$ whenever $t \in (t_k, t_{k+1}]$, $G(s) = F(s)$, and $G(t) = F(\infty)$ for $t > s$. Our construction hence ensures that $\abs{G(t) - F(t)} \le \eps$ for all $t \in [0, \infty]$.
		
		Then, for every $k \in \Z_{\ge 0}$, we define a truncation $\phi_k \colon \Omega \to \R$ of $\phi$ by
		\[
			\phi_k(x) = \begin{cases}
				t_{k+1} - t_{k}, & \phi(x) \ge t_{k+1},\\
				\phi(x) - t_k, & t_k < \phi(x) < t_{k+1},\\
				0, & -t_k \le \phi(x) \le t_k,\\
				-\phi(x) + t_k, & -t_{k+1} < \phi(x) < -t_k,\\
				-t_{k+1} + t_{k}, & \phi(x) \le -t_{k+1}\\
			\end{cases}
		\]
		for all $x \in \Omega$. It follows that $\phi_k \in W^{1,n}_0(\Omega)$, we have $\nabla \phi_k = \nabla \phi$ a.e.\ in the set $S_k := \left\{x \in \Omega : t_k < \abs{\phi(x)} < t_{k+1}\right\}$, and we moreover have $\nabla \phi_k = 0$ a.e.\ in $\Omega \setminus S_k$. Thus, 
%		if $g_k = (f_1, \dots, f_{i-i}, \phi_k, f_{i+1}, \dots, f_n)$, then 
		Lemma \ref{lem:Jacobian_zero_integral} yields that
		\begin{multline*}
			\int_{S_k} G(\abs{\phi}) \ip{\nabla \phi}{\xi}^{+}
			= \int_{\Omega} F(t_{k+1}) \ip{\nabla \phi_k}{\xi}^{+}\\
			= \int_{\Omega} F(t_{k+1}) \ip{\nabla \phi_k}{\xi}^{-}
			= \int_{S_k} G(\abs{\phi}) \ip{\nabla \phi}{\xi}^{-}.
		\end{multline*}
		
		We perform another similar truncation, and define $\phi_\infty \in W^{1,n}_0(\Omega)$ by 
		\[
			\phi_\infty(x) = \begin{cases}
				\phi(x) - s, & \phi(x) > s,\\
				0, & -s \le \phi(x) \le s,\\
				s - \phi(x), & \phi(x) < -s.
			\end{cases}
		\]
		As before, we then define $S_\infty := \left\{x \in \Omega : \abs{\phi(x)} > s\right\}$, and get
		\begin{multline*}
			\int_{S_\infty} G(\abs{\phi}) \ip{\nabla \phi}{\xi}^{+}
			= \int_{\Omega} F(\infty) \ip{\nabla \phi_\infty}{\xi}^{+}\\
			= \int_{\Omega} F(\infty) \ip{\nabla \phi_\infty}{\xi}^{-}
			= \int_{S_\infty} G(\abs{\phi}) \ip{\nabla \phi}{\xi}^{-}.
		\end{multline*}
		Note here that if $F(\infty) = \infty$, then by our assumption that $F \circ \abs{\phi} < \infty$ a.e.\ in $\Omega$, we must have that $S_\infty$ is a null-set, and both sides of the above integral are hence zero.
		
		Let $S$ then be the set of $x \in \Omega$ which are not in any of the sets $S_k, k \in \{0, \dots, \infty\}$. Then $\phi(S) \subset \{0, s, \infty, t_1, t_2, t_3, \dots \}$ is countable. Hence, we have $\nabla \phi= 0$ a.e.\ in $S$. By combining this with the above computations, we conclude that
		\[
			\int_{\Omega} G(\abs{\phi}) \ip{\nabla \phi}{\xi}^{+} 
				= \int_{\Omega} G(\abs{\phi}) \ip{\nabla \phi}{\xi}^{-}.
		\]
		However, since $\abs{G(t) - F(t)} \le \eps$ for all $t \in [0, \infty]$, the above equality yields that
		\[
			\abs{\int_{\Omega} F(\abs{\phi}) \ip{\nabla \phi}{\xi}^{+} - \int_{\Omega} F(\abs{\phi}) \ip{\nabla \phi}{\xi}^{-}}
			\le \eps \int_\Omega \abs{\ip{\nabla \phi}{\xi}}.
		\]
		Since $\phi \in W^{1,n}(\Omega)$ and $\xi \in L^{n/(n-1)}(\Omega, \R^n)$, we have $\ip{\nabla \phi}{\xi} \in L^1(\Omega)$, and letting $\eps \to 0$ hence yields the claim.
	\end{proof}
	
	\section{Distribution functions and Sobolev inequalities}\label{sect:distr_funct}
	
	In this section, we study distribution functions of measurable and Sobolev functions, culminating in a proof of Proposition \ref{prop:distr_Sobolev_ineq}.
	
	\subsection{Distribution functions} Let $\Omega \subset \R^n$ be open, and let $\phi \colon \Omega \to [0, \infty]$ be measurable. We define the \emph{upper distribution function} $\mu_\phi^{+} \colon [0, \infty] \to [0, m_n(\Omega)]$ and the \emph{lower distribution function} $\mu_\phi^{-} \colon [0, \infty] \to [0, m_n(\Omega)]$ of $\phi$ to be given by
	\begin{align}
		\label{eq:upper_Cavalieri_function}
		\mu_\phi^{+}(t) &= m_n (\{ x \in \Omega : \phi(x) \ge t \})\\
		\mu_\phi^{-}(t) &= m_n (\{ x \in \Omega : \phi(x) > t \})
	\end{align}
	for all $t \in [0, \infty]$. We also recall Cavalieri's principle, which tells us that
	\begin{equation}\label{eq:Cavalieri's_principle}
		\int_\Omega \phi \dd m_n 
		= \int_0^\infty \mu_\phi^{+}(t) \dd t
		= \int_0^\infty \mu_\phi^{-}(t) \dd t.
	\end{equation}
	
	Distribution functions are used in the study of decreasing rearrangements and symmetrizations of functions; see e.g.\ the survey monographs of Baernstein \cite{Baernstein_symmetrization} and Talenti \cite{Talenti_DecrRearr}. Such applications typically use the lower distribution function, but for our uses the upper distribution function ends up being far more useful.
	
	We point out some key properties of the two distribution functions. Both $\mu_\phi^{+}$ and $\mu_\phi^{-}$ are non-increasing, and satisfy 
	\[
	0 \le \mu_\phi^{-} \le \mu_\phi^{+} \le m_n(\Omega).
	\] 
	We observe that if $\Omega$ is bounded, then $\mu_\phi^{+}$ is left-continuous, while $\mu_\phi^{-}$ is right-continuous. For unbounded $\Omega$, the function $\mu_\phi^{-}$ remains right-continuous, but $\mu_\phi^{+}$ may have a single left jump discontinuity at $t = \sup \, (\mu_\phi^{+})^{-1}\{\infty\}$. Consequently, both $\mu_\phi^{+}$ and $\mu_\phi^{-}$ are Borel. Moreover, both $\mu_\phi^{+}$ and $\mu_\phi^{-}$ remain unchanged when $\phi$ is changed in a null-set of $\Omega$.
	
	We also note that $\mu_\phi^{+}(t) \ne \mu_\phi^{-}(t)$ only for at most countably many $t \in [0, \infty]$. Indeed, if $\mu_\phi^{+}(t) \ne \mu_\phi^{-}(t)$ for a given $t$, then $m_n(\varphi^{-1}\{t\}) > 0$. Since the sets $\varphi^{-1}\{t\}$ are pairwise disjoint and measurable, no more than countably many of them can have positive measure.
	
	We observe the following estimates for distribution functions.
	
	\begin{lemma}\label{lem:distr_level_set_bounds}
		Let $\Omega \subset \R^n$ be open and bounded, let $\phi \colon \Omega \to [0, \infty]$ be measurable, and let $a \in [0, m_n(\Omega)]$. Then we have
		\begin{align}
			\label{eq:distr_sublevel_lower_est}
			m_n \left( \{ x \in \Omega : \mu_\phi^{-}(\phi(x)) \le a\} \right) &\ge a
			\qquad \text{and}\\
			\label{eq:distr_sublevel_upper_est}
			m_n \left( \{ x \in \Omega : \mu_\phi^{+}(\phi(x)) < a\} \right) &\le a.
		\end{align}
	\end{lemma}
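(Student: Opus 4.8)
The plan is, for each of the two sets, to locate the critical threshold level of $\phi$ at which the relevant distribution function crosses the value $a$, and then to read off the measure of the set from the monotonicity of $\mu_\phi^{\pm}$ together with their one-sided continuity properties noted above. Two elementary tools are used repeatedly: Lebesgue measure is continuous along monotone sequences of sets (for continuity from above one uses that $\Omega$, hence each of its subsets, has finite measure since $\Omega$ is bounded), and an up-set of $[0,\infty]$ is an interval whose endpoint is its infimum.

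For \eqref{eq:distr_sublevel_lower_est}, put $t^{*} := \inf\{\, t \in [0,\infty] : \mu_\phi^{-}(t) \le a \,\}$; this is well defined, since $\mu_\phi^{-}(\infty) = 0 \le a$. Because $\mu_\phi^{-}$ is non-increasing and right-continuous, $\{\, t : \mu_\phi^{-}(t) \le a \,\} = [t^{*}, \infty]$, so the set in \eqref{eq:distr_sublevel_lower_est} equals $\phi^{-1}[t^{*}, \infty]$ and has measure exactly $\mu_\phi^{+}(t^{*})$. If $t^{*} = 0$ this is $m_n(\Omega) \ge a$, and we are done. Otherwise choose $t_k \uparrow t^{*}$ with $t_k < t^{*}$; then $\mu_\phi^{-}(t_k) > a$ by the definition of $t^{*}$, and continuity of $m_n$ from above along $\{\phi > t_k\} \downarrow \{\phi \ge t^{*}\}$ gives $\mu_\phi^{+}(t^{*}) = \lim_k \mu_\phi^{-}(t_k) \ge a$. (The value $t^{*} = \infty$ is permitted and is covered by the very same limit, reading $\{\phi \ge t^{*}\}$ as $\phi^{-1}\{\infty\}$.)

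For \eqref{eq:distr_sublevel_upper_est}, put $s^{*} := \inf\{\, t \in [0,\infty] : \mu_\phi^{+}(t) < a \,\}$, with the convention $\inf \emptyset = \infty$; if this set is empty, then the set in \eqref{eq:distr_sublevel_upper_est} is empty and there is nothing to prove. Otherwise monotonicity of $\mu_\phi^{+}$ forces $\mu_\phi^{+}(t) < a$ for every $t > s^{*}$. The key point is that $\mu_\phi^{+}(s^{*}) \ge a$, so that $s^{*}$ does \emph{not} lie in $\{\, t : \mu_\phi^{+}(t) < a \,\}$ and hence the set in \eqref{eq:distr_sublevel_upper_est} equals $\phi^{-1}(s^{*},\infty]$, of measure $\mu_\phi^{-}(s^{*})$: if $s^{*} > 0$ this is left-continuity of $\mu_\phi^{+}$ along a sequence $t_k \uparrow s^{*}$ with $\mu_\phi^{+}(t_k) \ge a$, and if $s^{*} = 0$ it is the hypothesis $\mu_\phi^{+}(0) = m_n(\Omega) \ge a$. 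Applying continuity of $m_n$ from below along $\{\phi \ge t\} \uparrow \{\phi > s^{*}\}$ as $t \downarrow s^{*}$, together with $m_n(\{\phi \ge t\}) = \mu_\phi^{+}(t) < a$ for $t > s^{*}$, then yields $\mu_\phi^{-}(s^{*}) = m_n(\{\phi > s^{*}\}) \le a$. The remaining case $s^{*} = \infty$, in which $\{\, t : \mu_\phi^{+}(t) < a \,\} = \{\infty\}$, is immediate: then the set in \eqref{eq:distr_sublevel_upper_est} is $\phi^{-1}\{\infty\}$, of measure $\mu_\phi^{+}(\infty) < a$.

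The argument is otherwise routine; the only points requiring care are the degenerate threshold values $t^{*}, s^{*} \in \{0, \infty\}$ and, most importantly, the asymmetry between the closed sublevel set $\{\mu_\phi^{-} \le a\}$ and the generally half-open set $\{\mu_\phi^{+} < a\}$. Since $\mu_\phi^{+}$ is only left-continuous, one obtains $\mu_\phi^{+}(s^{*}) \ge a$ but not $s^{*} \in \{\mu_\phi^{+} < a\}$; this is precisely where the hypothesis $a \le m_n(\Omega)$ enters, ruling out the degenerate possibility that $s^{*} = 0$ while $\mu_\phi^{+}(0) < a$.
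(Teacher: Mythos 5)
Your proof is correct. It is a close cousin of the paper's argument but with a somewhat different architecture: you introduce one threshold per inequality ($t^*=\inf\{t:\mu_\phi^-(t)\le a\}$ for \eqref{eq:distr_sublevel_lower_est}, $s^*=\inf\{t:\mu_\phi^+(t)<a\}$ for \eqref{eq:distr_sublevel_upper_est}), identify the set in question \emph{exactly} as a preimage of an interval ($\phi^{-1}[t^*,\infty]$, resp.\ $\phi^{-1}(s^*,\infty]$), and then bound its measure by appealing to continuity of Lebesgue measure along monotone sequences of superlevel sets (using $m_n(\Omega)<\infty$ for continuity from above). The paper instead introduces \emph{both} thresholds $s_a^+=\sup\{s:\mu_\phi^+(s)\ge a\}$ and $s_a^-=\inf\{s:\mu_\phi^-(s)\le a\}$ simultaneously, derives the key comparisons $\mu_\phi^-(s_a^-)\le a\le\mu_\phi^+(s_a^+)$ and $s_a^-\le s_a^+$ from one-sided continuity and the inequality $\mu_\phi^-\le\mu_\phi^+$, and then proves each estimate via a set containment rather than an exact identification, chaining through both thresholds in each case. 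Your route is more explicit (it tells you exactly which set you are measuring), at the modest cost of having to patrol the degenerate cases $t^*,s^*\in\{0,\infty\}$ by hand; the paper's route avoids the exact identification and the attendant edge cases by working purely with containments and monotonicity, never needing to know whether the threshold itself belongs to the relevant set. Both proofs hinge on the same facts: monotonicity of $\mu_\phi^\pm$, left-continuity of $\mu_\phi^+$, right-continuity of $\mu_\phi^-$, and the hypothesis $a\le m_n(\Omega)=\mu_\phi^+(0)$ to handle the endpoint $0$.
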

	\begin{proof}
		Let
		\begin{align*}
			s_a^{+} &:= \sup \{s \in [0, \infty] : \mu_\phi^{+}(s) \ge a\},\\
			s_a^{-} &:= \inf \{s \in [0, \infty] : \mu_\phi^{-}(s) \le a\}.
		\end{align*}
		Note that since $\mu_\phi^{+}(0) = m_n(\Omega)$ and $\mu_\phi^{-}(\infty) = 0$, the assumption $a \in [0, m_n(\Omega)]$ ensures that the sets used to define $s_a^{+}$ and $s_a^{-}$ are non-empty. By combining this non-emptiness property with the left-continuity of $\mu_\phi^{+}$ and the right-continuity of $\mu_\phi^{-}$, we conclude that
		\begin{equation}\label{eq:bounds_comparison_with_a}
			\mu_\phi^{-}(s_a^{-}) \le a \le \mu_\phi^{+}(s_a^{+}).
		\end{equation} 
		
		We then claim that
		\begin{equation}\label{eq:bounds_ordering}
			s_a^{-} \le s_a^{+}.
		\end{equation} 
		Indeed, suppose towards contradiction that $s_a^{-} > s_a^{+}$. In this case, $(s_a^{+}, s_a^{-})$ is a non-empty interval, and for every $s \in (s_a^{+}, s_a^{-})$, we have $\mu_\phi^{+}(s) < a < \mu_\phi^{-}(s)$ by the definitions of $s_a^{+}$ and $s_a^{-}$. Since $\mu_\phi^{+}(s) \ge \mu_\phi^{-}(s)$ for all $s$, this is a contradiction, completing the proof of \eqref{eq:bounds_ordering}.
		
		Now, we first prove \eqref{eq:distr_sublevel_lower_est}. If $x \in \Omega$ is such that $\phi(x) \ge s_a^{-}$, then since $\mu_\phi^{-}$ is non-increasing, \eqref{eq:bounds_comparison_with_a} yields that $\mu_\phi^{-}(\phi(x)) \le \mu_\phi^{-}(s_a^{-}) \le a$. Thus, we conclude that
		\[
		\{ x \in \Omega : \phi(x) \ge s_a^{-}\} \subset 
		\{ x \in \Omega : \mu_\phi^{-}(\phi(x)) \le a \}.
		\]
		It hence follows using \eqref{eq:bounds_comparison_with_a}, \eqref{eq:bounds_ordering}, and the fact that $\mu_\phi^{+}$ is non-increasing, that
		\[
		m_n\left( \{ x \in \Omega : \mu_\phi^{-}(\phi(x)) \le a \} \right) 
		\ge \mu_\phi^{+}(s_a^{-})
		\ge \mu_\phi^{+}(s_a^{+})
		\ge a,
		\]
		completing the proof of \eqref{eq:distr_sublevel_lower_est}.
		
		It remains to prove \eqref{eq:distr_sublevel_upper_est}. We first note that if $x \in \Omega$ is such that $\phi(x) \le s_a^{+}$, then \eqref{eq:bounds_comparison_with_a} and the fact that $\mu_{\phi}^{+}$ is non-increasing yield that $\mu_\phi^{+}(\phi(x)) \ge \mu_\phi^{+}(s_a^{+}) \ge a$. Thus, $\left\{ x \in \Omega : \phi(x) \le s_a^{+}\right\} \subset \left\{ x \in \Omega : \mu_\phi^{+}(\phi(x)) \ge a \right\}$, which implies that
		\[
		\{ x \in \Omega : \mu_\phi^{+}(\phi(x)) < a \}
		\subset \{ x \in \Omega : \phi(x) > s_a^{+}\}.
		\]
		Now, by again using \eqref{eq:bounds_comparison_with_a}, \eqref{eq:bounds_ordering}, and the non-increasing property of $\mu_\phi^{-}$, it follows that
		\[
		m_n\left( \{ x \in \Omega : \mu_\phi^{+}(\phi(x)) < a \} \right) 
		\le \mu_\phi^{-}(s_a^{+})
		\le \mu_\phi^{-}(s_a^{-})
		\le a,
		\]
		completing the proof of \eqref{eq:distr_sublevel_upper_est}.
	\end{proof}
	
	We then study integrals of powers of distribution functions composed with the original function. We begin with the result for negative exponents.
	
	\begin{lemma}\label{lem:inv_cav_funct_integral}
		Let $\Omega \subset \R^n$ be open and bounded, and let $\phi \colon \Omega \to [0, \infty]$ be measurable. Then for every $\gamma \in (0, 1)$, we have
		\[
		\int_\Omega \frac{1}{(\mu_\phi^{+} \circ \phi)^\gamma} \dd m_n 
		\le \frac{1}{1 - \gamma} [m_n(\Omega)]^{1-\gamma}
		\le \int_\Omega \frac{1}{(\mu_\phi^{-} \circ \phi)^\gamma} \dd m_n.
		\]
		Moreover, for every $\gamma \in [1, \infty)$, we have
		\[
		\int_\Omega \frac{1}{(\mu_\phi^{-} \circ \phi)^\gamma} \dd m_n
		= \infty.
		\]
	\end{lemma}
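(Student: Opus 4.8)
The plan is to reduce the whole statement to a single one-dimensional computation, by applying Cavalieri's principle to the functions $(\mu_\phi^{\pm}\circ\phi)^{-\gamma}$ and controlling their superlevel sets via Lemma~\ref{lem:distr_level_set_bounds}.

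First I would record one preliminary fact: $\mu_\phi^{+}\circ\phi > 0$ a.e.\ in $\Omega$. This is immediate from \eqref{eq:distr_sublevel_upper_est}, since for every $a>0$ we have $m_n(\{x\in\Omega : \mu_\phi^{+}(\phi(x)) = 0\}) \le m_n(\{x\in\Omega : \mu_\phi^{+}(\phi(x)) < a\}) \le a$, and letting $a\to 0$ gives the claim. Hence $(\mu_\phi^{+}\circ\phi)^{-\gamma}$ is an a.e.\ finite, $[0,\infty]$-valued measurable function (measurable because $\mu_\phi^{+}$ is Borel and $\phi$ is measurable); note that without this the asserted finite upper bound would simply fail. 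No such remark is needed for $\mu_\phi^{-}\circ\phi$, since there the possible value $+\infty$ only strengthens the lower bounds.

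Next I would apply Cavalieri's principle \eqref{eq:Cavalieri's_principle} to $h := (\mu_\phi^{+}\circ\phi)^{-\gamma}$. For $t>0$ one has $h(x) > t$ if and only if $\mu_\phi^{+}(\phi(x)) < t^{-1/\gamma}$, so $\int_\Omega h \dd m_n = \int_0^\infty m_n(\{x\in\Omega : \mu_\phi^{+}(\phi(x)) < t^{-1/\gamma}\}) \dd t$. I then split the $t$-integral at $t_0 := [m_n(\Omega)]^{-\gamma}$. For $t < t_0$ we have $t^{-1/\gamma} > m_n(\Omega) \ge \mu_\phi^{+}\circ\phi$, so the superlevel set is all of $\Omega$ and this range contributes $m_n(\Omega)\cdot t_0 = [m_n(\Omega)]^{1-\gamma}$. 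For $t \ge t_0$, the estimate \eqref{eq:distr_sublevel_upper_est} with $a = t^{-1/\gamma} \le m_n(\Omega)$ gives $m_n(\{\,\cdot\,\}) \le t^{-1/\gamma}$, and since $\gamma\in(0,1)$ we have $1/\gamma > 1$, so $\int_{t_0}^\infty t^{-1/\gamma}\dd t = \frac{\gamma}{1-\gamma}[m_n(\Omega)]^{1-\gamma}$. Adding the two contributions yields exactly $\frac{1}{1-\gamma}[m_n(\Omega)]^{1-\gamma}$, which is the first inequality. The second inequality is obtained by the mirror-image argument applied to $\mu_\phi^{-}\circ\phi$: I first upgrade \eqref{eq:distr_sublevel_lower_est} to $m_n(\{x\in\Omega : \mu_\phi^{-}(\phi(x)) < a\}) \ge a$ for every $a \le m_n(\Omega)$, by writing $\{\mu_\phi^{-}\circ\phi < a\}$ as the increasing union of the sets $\{\mu_\phi^{-}\circ\phi \le a'\}$ with $a' < a$ and using continuity of $m_n$ from below together with \eqref{eq:distr_sublevel_lower_est}; then the identical $t$-integral, now with every inequality reversed, bounds $\int_\Omega (\mu_\phi^{-}\circ\phi)^{-\gamma}\dd m_n$ from below by the same value $\frac{1}{1-\gamma}[m_n(\Omega)]^{1-\gamma}$.

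Finally, for $\gamma\in[1,\infty)$ the very same lower estimate gives $\int_\Omega (\mu_\phi^{-}\circ\phi)^{-\gamma}\dd m_n \ge \int_{t_0}^\infty t^{-1/\gamma}\dd t$, and since $1/\gamma \le 1$ this integral diverges, so the left-hand side is $\infty$, as claimed. There is no serious obstacle in this argument; the only points that need a little care are the a.e.-positivity of $\mu_\phi^{+}\circ\phi$ and the passage from the ``$\le a$'' superlevel bound to the ``$< a$'' one in the lower estimate, both dealt with above.
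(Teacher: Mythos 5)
Your proof is correct and matches the paper's argument essentially line by line: expand $\int_\Omega (\mu_\phi^{\pm}\circ\phi)^{-\gamma}\dd m_n$ by Cavalieri's principle, split the $t$-integral at $t_0 = [m_n(\Omega)]^{-\gamma}$, and apply Lemma~\ref{lem:distr_level_set_bounds} on the tail, with the preliminary observation that $\mu_\phi^{+}\circ\phi>0$ a.e.\ being a correct (and implicitly used) consequence of \eqref{eq:distr_sublevel_upper_est}. The only cosmetic deviation is that for the $\mu_\phi^{-}$ lower bound you use the $\{h>t\}$ form of Cavalieri's principle and therefore need to upgrade \eqref{eq:distr_sublevel_lower_est} to a strict-inequality superlevel bound via continuity of $m_n$ from below, whereas the paper instead switches to the $\{h\ge t\}$ form so that \eqref{eq:distr_sublevel_lower_est} applies directly with no extra step; both routes work equally well.
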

	\begin{proof}
		Note that the integrands are measurable since $\phi$ is measurable and $\mu_\phi^{\pm}$ are Borel. We first prove the estimate for $\mu_\phi^{+}$. We apply Cavalieri's principle, notably the lower distribution function version, obtaining that
		\begin{align*}
			\int_\Omega \frac{1}{(\mu_\phi^{+} \circ \phi)^\gamma} \dd m_n
			&= \int_0^\infty m_n \left(\left\{ x \in \Omega : \frac{1}{(\mu_\phi^{+}(\phi(x)))^\gamma} > t \right\}\right) \dd t\\
			&= \int_0^\infty m_n \left(\left\{ x \in \Omega : \mu_\phi^{+}(\phi(x)) < \frac{1}{t^{1/\gamma}} \right\}\right) \dd t.
		\end{align*}
		Note that for $t > 0$, if $t^{-1/\gamma} > m_n(\Omega)$, i.e.\ if $t < [m_n(\Omega)]^{-\gamma}$, then the set in the integral is all of $\Omega$. On the other hand, if $t^{-1/\gamma} \le m_n(\Omega)$, then we may apply the upper bound of Lemma \ref{lem:distr_level_set_bounds} \eqref{eq:distr_sublevel_upper_est}. Thus, we conclude that
		\begin{align*}
			\int_\Omega \frac{1}{(\mu_\phi^{+} \circ \phi)^\gamma} \dd m_n
			&\leq \int_0^{[m_n(\Omega)]^{-\gamma}} m_n(\Omega) \dd t + \int_{[m_n(\Omega)]^{-\gamma}}^\infty \frac{1}{t^{1/\gamma}} \dd t\\
			&= [m_n(\Omega)]^{1-\gamma} + \frac{\gamma}{1 - \gamma}[m_n(\Omega)]^{1-\gamma}
			= \frac{1}{1 - \gamma}[m_n(\Omega)]^{1-\gamma}
		\end{align*}
		when $0 < \gamma < 1$.
		
		The proof for $\mu_\phi^{-}$ is similar. Indeed, with the upper distribution function version of Cavalieri's principle, we get that
		\[
		\int_\Omega \frac{1}{(\mu_\phi^{-} \circ \phi)^\gamma} \dd m_n
		= \int_0^\infty m_n \left(\left\{ x \in \Omega : \mu_\phi^{+}(\phi(x)) \le \frac{1}{t^{1/\gamma}} \right\}\right) \dd t.
		\]
		We then again split to two regions as above, except this time we use the lower bound of Lemma \ref{lem:distr_level_set_bounds} \eqref{eq:distr_sublevel_lower_est} in the appropriate region. It follows that
		\[
		\int_\Omega \frac{1}{(\mu_\phi^{-} \circ \phi)^\gamma} \dd m_n
		\ge [m_n(\Omega)]^{1-\gamma} + \int_{[m_n(\Omega)]^{-\gamma}}^\infty \frac{1}{t^{1/\gamma}} \dd t,
		\]
		where this lower bound equals $(1-\gamma)^{-1} [m_n(\Omega)]^{1-\gamma}$ when $0 < \gamma < 1$ and diverges to $\infty$ when $\gamma \ge 1$.
	\end{proof}
	
	For completeness, we also provide an analogous integral estimate for positive exponents; note that the order of the bounds ends up reversed.
	
	\begin{lemma}\label{lem:cav_funct_integral}
		Let $\Omega \subset \R^n$ be open and bounded, and let $\phi \colon \Omega \to [0, \infty]$ be measurable. Then for every $r \in (0, \infty)$, we have
		\[
		\int_\Omega (\mu_\phi^{-} \circ \phi)^r \dd m_n 
		\le \frac{1}{1+r}[m_n(\Omega)]^{1+r}
		\le \int_\Omega (\mu_\phi^{+} \circ \phi)^r \dd m_n.
		\]
	\end{lemma}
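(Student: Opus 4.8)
The plan is to follow the same template as the proof of Lemma~\ref{lem:inv_cav_funct_integral}: express each side via Cavalieri's principle as a $\dd t$-integral of a distribution function, estimate that distribution function using the complementary bounds in Lemma~\ref{lem:distr_level_set_bounds}, and finish with an elementary one-variable integral. As in Lemma~\ref{lem:inv_cav_funct_integral}, the relevant integrands are measurable because $\phi$ is measurable and $\mu_\phi^{\pm}$ are Borel; and since $r > 0$ the map $u \mapsto u^r$ is a strictly increasing bijection of $[0, \infty)$, so superlevel sets of $(\mu_\phi^{\pm} \circ \phi)^r$ are superlevel sets of $\mu_\phi^{\pm} \circ \phi$ with the threshold replaced by its $r$-th root.

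For the first inequality I would apply the lower distribution function version of Cavalieri's principle to $(\mu_\phi^{-} \circ \phi)^r$, obtaining
\[
	\int_\Omega (\mu_\phi^{-} \circ \phi)^r \dd m_n = \int_0^\infty m_n\bigl( \{ x \in \Omega : \mu_\phi^{-}(\phi(x)) > t^{1/r} \} \bigr) \dd t.
\]
Since $\mu_\phi^{-} \le m_n(\Omega)$ pointwise, the set on the right is empty once $t \ge [m_n(\Omega)]^r$, so only $t \in [0, [m_n(\Omega)]^r)$ contributes; there $t^{1/r} \in [0, m_n(\Omega)]$, and taking complements in \eqref{eq:distr_sublevel_lower_est} gives $m_n(\{x \in \Omega : \mu_\phi^{-}(\phi(x)) > t^{1/r}\}) \le m_n(\Omega) - t^{1/r}$. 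Hence
\[
	\int_\Omega (\mu_\phi^{-} \circ \phi)^r \dd m_n \le \int_0^{[m_n(\Omega)]^r} \bigl( m_n(\Omega) - t^{1/r} \bigr) \dd t = \frac{1}{1+r}[m_n(\Omega)]^{1+r},
\]
using $\int_0^{A^r} t^{1/r} \dd t = \frac{r}{1+r} A^{1+r}$ with $A = m_n(\Omega)$.

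For the second inequality I would instead use the upper distribution function version of Cavalieri's principle applied to $(\mu_\phi^{+} \circ \phi)^r$, so that
\[
	\int_\Omega (\mu_\phi^{+} \circ \phi)^r \dd m_n = \int_0^\infty m_n\bigl( \{ x \in \Omega : \mu_\phi^{+}(\phi(x)) \ge t^{1/r} \} \bigr) \dd t.
\]
Restricting the outer integral to $t \in [0, [m_n(\Omega)]^r]$, where $t^{1/r} \in [0, m_n(\Omega)]$, can only decrease it, and on that range taking complements in \eqref{eq:distr_sublevel_upper_est} gives $m_n(\{x \in \Omega : \mu_\phi^{+}(\phi(x)) \ge t^{1/r}\}) \ge m_n(\Omega) - t^{1/r}$. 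The same elementary integral as above then yields
\[
	\int_\Omega (\mu_\phi^{+} \circ \phi)^r \dd m_n \ge \int_0^{[m_n(\Omega)]^r} \bigl( m_n(\Omega) - t^{1/r} \bigr) \dd t = \frac{1}{1+r}[m_n(\Omega)]^{1+r}.
\]

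I do not expect any serious obstacle here; the argument is essentially bookkeeping once Lemma~\ref{lem:distr_level_set_bounds} is in hand. The only point that needs a little care is pairing the correct version of Cavalieri's principle with the correct estimate in Lemma~\ref{lem:distr_level_set_bounds} so that the complementary bounds point in the useful direction — the strict superlevel sets together with \eqref{eq:distr_sublevel_lower_est} for the upper bound, and the non-strict superlevel sets together with \eqref{eq:distr_sublevel_upper_est} for the lower bound — and observing that truncating the $t$-range at $[m_n(\Omega)]^r$ is harmless: for the first inequality the superlevel set is genuinely empty beyond that point, while for the second we are proving a lower bound and may simply discard the tail.
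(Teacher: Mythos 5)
Your proposal is correct and follows essentially the same route as the paper: pair the appropriate version of Cavalieri's principle with Lemma~\ref{lem:distr_level_set_bounds}, truncate the $t$-integral at $[m_n(\Omega)]^r$, and evaluate the resulting elementary integral. The only difference is cosmetic — you pass to complements at the level of the measure estimate, whereas the paper first rewrites the superlevel set as $\Omega$ minus its complement and subtracts the corresponding integral — and the two computations are identical.
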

	\begin{proof}
		We first prove the lower bound for $(\mu_\phi^{+} \circ \phi)^r$. We start similarly as in the proof of Lemma \ref{lem:inv_cav_funct_integral}, and apply Cavalieri's principle to conclude that
		\begin{align*}
			\int_\Omega (\mu_\phi^{+} \circ \phi)^r \dd m_n
			&= \int_0^\infty m_n \left(\left\{ x \in \Omega : 
			(\mu_\phi^{+}(\phi(x)))^r \ge t \right\}\right) \dd t\\
			&= \int_0^{\infty} m_n \left(\left\{ x \in \Omega : \mu_\phi^{+}(\phi(x)) \ge t^{1/r} \right\}\right) \dd t,
		\end{align*}
		Notably, if $t > [m_n(\Omega)]^r$, then we have $\mu^{+}_\phi(\phi(x)) \le m_n(\Omega) < t^{1/r}$ for all $x \in \Omega$, implying that the set in the above integral is empty for such $t$. Thus,
		\[
		\int_\Omega (\mu_\phi^{+} \circ \phi)^r \dd m_n
		= \int_0^{[m_n(\Omega)]^r} m_n \left(\left\{ x \in \Omega : \mu_\phi^{+}(\phi(x)) \ge t^{1/r} \right\}\right) \dd t.
		\]
		Next, note that $\left\{ x \in \Omega : \mu_\phi^{+}(\phi(x)) \ge t^{1/r} \right\} = \Omega \setminus \left\{ x \in \Omega : \mu_\phi^{+}(\phi(x)) < t^{1/r} \right\}$. Thus, the above integral becomes
		\begin{multline*}
			\int_\Omega (\mu_\phi^{+} \circ \phi)^r \dd m_n\\
			= [m_n(\Omega)]^{r+1} - \int_0^{[m_n(\Omega)]^r} m_n \left(\left\{ x \in \Omega : \mu_\phi^{+}(\phi(x)) < t^{1/r} \right\}\right) \dd t.
		\end{multline*}
		Now, the final step is to use the upper bound of Lemma \ref{lem:distr_level_set_bounds} \eqref{eq:distr_sublevel_upper_est}, and compute that
		\begin{align*}
			\int_\Omega (\mu_\phi^{+} \circ \phi)^r \dd m_n
			&\ge [m_n(\Omega)]^{r+1} - \int_0^{[m_n(\Omega)]^r} t^{1/r} \dd t
			= \frac{1}{1+r}[m_n(\Omega)]^{1+r},
		\end{align*}
		proving the desired estimate for $(\mu_\phi^{+} \circ \phi)^r$.
		
		The upper bound for $(\mu_\phi^{-} \circ \phi)^r$ is proven analogously. Indeed, if we instead start with the lower distribution function version of Cavalieri's principle, the above computation, mutatis mutandis, yields that
		\begin{multline*}
			\int_\Omega (\mu_\phi^{-} \circ \phi)^r \dd m_n\\
			= [m_n(\Omega)]^{r+1} - \int_0^{[m_n(\Omega)]^r} m_n \left(\left\{ x \in \Omega : \mu_\phi^{-}(\phi(x)) \le t^{1/r} \right\}\right) \dd t.
		\end{multline*}
		The lower bound of Lemma \ref{lem:distr_level_set_bounds} \eqref{eq:distr_sublevel_lower_est} then yields the desired
		\[
		\int_\Omega (\mu_\phi^{-} \circ \phi)^r \dd m_n
		\le [m_n(\Omega)]^{r+1} - \int_0^{[m_n(\Omega)]^r} t^{1/r} \dd t
		= \frac{1}{1+r}[m_n(\Omega)]^{1+r}.
		\]
	\end{proof}
	
	\subsection{Proof of Proposition \ref{prop:distr_Sobolev_ineq}}
	
	We then proceed to prove Proposition \ref{prop:distr_Sobolev_ineq}. Note that if we write the statement using the upper distribution function, the inequality reads as
	\[
	\norm{\phi}_{L^\infty(\Omega)} \le \frac{1}{n \omega_n^{1/n}} 
	\int_\Omega 
	\frac{\abs{\nabla \phi}}{[\mu_\phi^{+} \circ \phi]^{\frac{n-1}{n}}}
	\dd m_n.
	\]
	Proposition \ref{prop:distr_Sobolev_ineq} is closely tied to the isoperimetric inequality; indeed, for $\phi \in C^\infty_c(\Omega)$, one can easily derive it using the coarea formula and the classical isoperimetric inequality. Here, we choose an approach which uses the sharp Sobolev inequality for $W^{1,1}$-functions, which is famously known to be equivalent to the isoperimetric inequality. We first recall this sharp Sobolev inequality; see e.g.\ \cite[Section 4.6]{Baernstein_symmetrization} for details.
	
	\begin{lemma}\label{lem:sharp_1-Sobolev_ineq}
		Let $n \ge 2$, let $\Omega \subset \R^n$ be open, and let $\phi \in W^{1,1}_0(\Omega)$. Then
		\[
		\left( \int_\Omega \abs{\phi}^\frac{n}{n-1} \dd m_n \right)^\frac{n-1}{n}
		\le  \frac{1}{n \omega_n^{1/n}} \int_{\Omega} \abs{\nabla\phi} \dd m_n.
		\]
	\end{lemma}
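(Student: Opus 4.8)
The plan is to reduce the inequality to the classical isoperimetric inequality via the coarea formula; this is the standard route, carried out in detail in \cite[Section 4.6]{Baernstein_symmetrization}, and I sketch it here for self-containedness.

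First I would reduce to the case $\phi \in C^\infty_c(\Omega)$ with $\phi \ge 0$. If $\phi_j \in C^\infty_c(\Omega)$ converge to $\phi$ in $W^{1,1}_0(\Omega)$, then $\smallabs{\nabla \phi_j} \to \smallabs{\nabla\phi}$ in $L^1(\Omega)$; passing to a subsequence with $\phi_j \to \phi$ a.e.\ and applying Fatou's lemma to the left-hand side, it suffices to establish the inequality for each $\phi_j$. Replacing $\phi$ by $\smallabs{\phi}$, which lies in $W^{1,1}_0(\Omega)$ with $\smallabs{\nabla\smallabs{\phi}} = \smallabs{\nabla\phi}$ a.e., I may also assume $\phi \ge 0$.

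For the left-hand side I would use a layer-cake representation together with Minkowski's integral inequality. Writing $V(t) := m_n(\{x \in \Omega : \phi(x) > t\})$ and $\phi(x) = \int_0^\infty \chi_{\{\phi > t\}}(x) \dd t$, Minkowski's integral inequality in $L^{n/(n-1)}(\Omega)$ yields
\[
\Bigl( \int_\Omega \phi^{\frac{n}{n-1}} \dd m_n \Bigr)^{\frac{n-1}{n}}
\le \int_0^\infty [V(t)]^{\frac{n-1}{n}} \dd t.
\]
For the right-hand side, the coarea formula gives $\int_\Omega \smallabs{\nabla\phi} \dd m_n = \int_0^\infty \cH^{n-1}(\phi^{-1}\{t\}) \dd t$, and by Sard's theorem, for a.e.\ $t > 0$ the level set $\phi^{-1}\{t\}$ is a smooth compact hypersurface bounding the bounded open set $\{\phi > t\}$. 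The classical isoperimetric inequality then gives $\cH^{n-1}(\phi^{-1}\{t\}) \ge n\omega_n^{1/n} [V(t)]^{\frac{n-1}{n}}$ for a.e.\ $t$, whence
\[
\int_\Omega \smallabs{\nabla\phi} \dd m_n \ge n\omega_n^{1/n} \int_0^\infty [V(t)]^{\frac{n-1}{n}} \dd t \ge n\omega_n^{1/n} \Bigl( \int_\Omega \phi^{\frac{n}{n-1}} \dd m_n \Bigr)^{\frac{n-1}{n}},
\]
which is the claim.

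The main point to watch is that the sharp constant $(n\omega_n^{1/n})^{-1}$ is precisely the one from the isoperimetric inequality (attained by balls), so nothing is lost there; the coarea formula, the Sard-theorem regularity of a.e.\ level set, and the approximation step are all routine. If a fully self-contained treatment is not wanted, the lemma may simply be quoted from \cite[Section 4.6]{Baernstein_symmetrization}.
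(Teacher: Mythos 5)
The paper does not actually prove this lemma; it quotes it from \cite[Section 4.6]{Baernstein_symmetrization}, and the coarea--isoperimetric route you sketch is exactly the standard argument alluded to there (the paper even remarks, just before stating the lemma, that the inequality is famously equivalent to the isoperimetric inequality). Your sketch is correct; the one point deserving a word in a full write-up is that $\abs{\phi_j}$ is no longer smooth after you take absolute values, so you should either apply Sard to $\phi_j$ itself and observe that $\partial\{\abs{\phi_j}>t\}\subset\phi_j^{-1}\{t\}\cup\phi_j^{-1}\{-t\}$ is a smooth hypersurface for a.e.\ $t>0$, or run the argument for $\phi_j^{+}$ and $\phi_j^{-}$ separately and combine using the subadditivity of $s\mapsto s^{(n-1)/n}$.
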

	
	The key lemma we use to show Proposition \ref{prop:distr_Sobolev_ineq} is as follows.
	
	\begin{lemma}\label{lem:bound_difference_by_gradient}
		Let $n \ge 2$, let $\Omega \subset \R^n$ be open, and let $\phi \in W^{1,1}_0(\Omega)$ with $\phi \ge 0$ a.e.\ in $\Omega$. Then for all $0 \le a < b < \norm{\phi}_{L^\infty(\Omega)}$, we have
		\[
		(b-a) \le \frac{1}{n \omega_n^{1/n}} \int_{\phi^{-1}(a, b)} 
		\frac{\abs{\nabla\phi}}{[\mu^{+}_\phi(b)]^\frac{n-1}{n}} \dd m_n.
		\]
	\end{lemma}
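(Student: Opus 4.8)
The plan is to reduce the claim to the sharp $W^{1,1}_0$-Sobolev inequality of Lemma~\ref{lem:sharp_1-Sobolev_ineq} applied to a suitable truncation of $\phi$. Fix $0 \le a < b < \norm{\phi}_{L^\infty(\Omega)}$ and set
\[
	\psi := \min\bigl( (\phi - a)^+, b - a \bigr).
\]
Since $\phi \ge 0$ a.e.\ and $\phi \in W^{1,1}_0(\Omega)$, the truncation facts recalled in the preliminaries give $\psi \in W^{1,1}_0(\Omega)$, with $\nabla \psi = \nabla \phi$ a.e.\ on $\phi^{-1}(a, b)$ and $\nabla \psi = 0$ a.e.\ elsewhere. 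Thus the right-hand side of Lemma~\ref{lem:sharp_1-Sobolev_ineq} applied to $\psi$ is exactly $\frac{1}{n\omega_n^{1/n}} \int_{\phi^{-1}(a,b)} \abs{\nabla \phi} \dd m_n$.

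Next I would bound the left-hand side from below. On the set $\phi^{-1}[b, \infty]$ we have $\psi \equiv b - a$, and this set has measure $\mu_\phi^{+}(b)$. Hence
\[
	\left( \int_\Omega \psi^{\frac{n}{n-1}} \dd m_n \right)^{\frac{n-1}{n}}
	\ge \left( (b-a)^{\frac{n}{n-1}} \, \mu_\phi^{+}(b) \right)^{\frac{n-1}{n}}
	= (b-a) \, [\mu_\phi^{+}(b)]^{\frac{n-1}{n}}.
\]
Combining this with Lemma~\ref{lem:sharp_1-Sobolev_ineq} gives
\[
	(b-a)\,[\mu_\phi^{+}(b)]^{\frac{n-1}{n}} \le \frac{1}{n\omega_n^{1/n}} \int_{\phi^{-1}(a,b)} \abs{\nabla \phi} \dd m_n,
\]
and dividing through by $[\mu_\phi^{+}(b)]^{\frac{n-1}{n}}$ — which is positive since $b < \norm{\phi}_{L^\infty(\Omega)}$ forces $m_n(\phi^{-1}[b,\infty]) > 0$ — yields precisely the asserted inequality, as $\mu_\phi^{+}(b)$ is a constant that can be pulled inside the integral.

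The only genuine point requiring care is the positivity of $\mu_\phi^{+}(b)$, i.e.\ that $b < \norm{\phi}_{L^\infty(\Omega)}$ really implies $m_n(\{\phi \ge b\}) > 0$; this is immediate from the definition of the essential supremum, but it is worth stating since the final division depends on it (and it also guarantees $\norm{\phi}_{L^\infty(\Omega)}$ need not be finite for the statement to make sense). Everything else is a direct application of the truncation rules for Sobolev gradients and of Lemma~\ref{lem:sharp_1-Sobolev_ineq}; there is no real obstacle beyond choosing the right truncation $\psi$ so that its gradient is supported on $\phi^{-1}(a,b)$ while its sublevel set $\{\psi = b-a\}$ has measure $\mu_\phi^{+}(b)$.
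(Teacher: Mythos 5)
Your proposal is correct and follows essentially the same route as the paper: the truncation $\psi = \min((\phi-a)^+, b-a)$ is exactly the paper's case-defined $\psi$, you invoke the same sharp $W^{1,1}_0$-Sobolev inequality (Lemma~\ref{lem:sharp_1-Sobolev_ineq}), and you lower-bound its left side by restricting to $\phi^{-1}[b,\infty]$ before dividing by $[\mu_\phi^+(b)]^{(n-1)/n}$. The observation that $b < \norm{\phi}_{L^\infty(\Omega)}$ forces $\mu_\phi^+(b) > 0$ is also made (implicitly) in the paper, so there is no gap.
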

	\begin{proof}
		Let $0 \le a < b < \norm{\phi}_{L^\infty(\Omega)}$, and define a truncated function $\psi \colon \Omega \to \R$ by
		\[
		\psi(x) := \begin{cases}
			0 & \phi(x) \le a,\\
			\phi(x) - a & a < \phi(x) < b,\\
			b - a & \phi(x) \ge b
		\end{cases}
		\]
		for all $x \in \Omega$. Since $\abs{\psi} \le \phi$, it follows that $\psi$ is in $W^{1,1}_0(\Omega)$. Moreover, we have $\nabla \psi(x) = \nabla \phi(x)$ for a.e.\ $x \in \Omega$ with $a < \psi(x) < b$, and $\nabla \psi(x) = 0$ for a.e.\ $x \in \Omega$ with $\psi(x) \ge b$ or $\psi(x) \le a$. 
		
		We then apply Lemma \ref{lem:sharp_1-Sobolev_ineq} on $\psi$. It follows that
		\begin{multline*}
			\frac{1}{n \omega_n^{1/n}} \int_{\phi^{-1}(a,b)} \abs{\nabla\phi} \dd m_n
			= \frac{1}{n \omega_n^{1/n}} \int_{\Omega} \abs{\nabla\psi} \dd m_n\\
			\ge \left( \int_\Omega \abs{\psi}^\frac{n}{n-1} \dd m_n \right)^\frac{n-1}{n}
			\ge \left( \int_{\phi^{-1}[b, \infty]} \abs{b-a}^\frac{n}{n-1} 
			\dd m_n \right)^\frac{n-1}{n}\\ 
			= (b-a) [\mu^{+}_\phi(b)]^\frac{n-1}{n}.
		\end{multline*}
		We then divide on both sides by $[\mu^{+}_\phi(b)]^\frac{n-1}{n}$, which is non-zero since $b < \norm{\phi}_{L^\infty(\Omega)}$ and finite since $\varphi \in L^1(\Omega)$. The claim hence follows.
	\end{proof}
	
	We then note that if $\Omega \subset \R^n$ is open and if $\phi \colon \Omega \to [0, \infty]$ is measurable, then $\mu_\phi^{+}(t) = 0$ when $t > \norm{\phi}_{L^\infty(\Omega)}$, and $\mu_\phi^{+}(t) > 0$ when $t < \norm{\phi}_{L^\infty(\Omega)}$. Hence, if $\mu_\phi^{+}(t) < \infty$ for all $t > 0$, then $\mu_\phi^{+}$ is left-continuous, and we may apply the Staircase Lemma \ref{lem:left_cont_approx} on $1/(\mu_\phi^{+})^\gamma$ with $s = \norm{\phi}_{L^\infty(\Omega)}$. Thus, we obtain the following approximation result. 
	
	\begin{cor}\label{cor:distr_inv_approx}
		Let $\Omega \subset \R^n$ be open, let $\phi \colon \Omega \to [0, \infty]$ be measurable, and let $\gamma > 0$. Suppose that $\mu_\phi^{+}(t) < \infty$ for all $t > 0$. If $\norm{\phi}_{L^\infty(\Omega)} > 0$, then for every $\eps > 0$, there exists an increasing sequence $t_0 < t_1 < t_2 < \dots$ for which $t_0 = 0$, $\lim_{i \to \infty} t_i = \norm{\phi}_{L^\infty(\Omega)}$, and for every $i \in \Z_{>0}$ and all $t \in (t_{i-1}, t_i]$, we have
		\[
		\abs{\frac{1}{[\mu_\phi^{+}(t_i)]^\gamma} - \frac{1}{[\mu_\phi^{+}(t)]^\gamma}} \le \eps.
		\]
	\end{cor}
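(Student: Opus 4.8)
The plan is to obtain the corollary as an immediate application of the Staircase Lemma (Lemma \ref{lem:left_cont_approx}) to the function $F \colon [0, \infty] \to [0, \infty]$ defined by $F(t) = [\mu_\phi^{+}(t)]^{-\gamma}$, with the convention $0^{-\gamma} = \infty$. Since $\Omega$ is bounded we have $m_n(\Omega) < \infty$, so $F$ genuinely takes values in $[0, \infty]$, with $F(0) = [m_n(\Omega)]^{-\gamma}$ finite; thus $F$ is an admissible input for Lemma \ref{lem:left_cont_approx}.

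First I would verify the two structural hypotheses of Lemma \ref{lem:left_cont_approx}. The upper distribution function $\mu_\phi^{+}$ is non-increasing and left-continuous, as recalled in this section, while the map $u \mapsto u^{-\gamma}$ is continuous and non-increasing on $[0, \infty]$ (interpreted in the extended sense, sending $0$ to $\infty$); hence the composition $F$ is non-decreasing and left-continuous. Next I would identify the threshold $s := \sup\{ t \in [0, \infty] : F(t) < F(\infty)\}$ appearing in the lemma. Using the observation recorded just before the statement — namely that $\mu_\phi^{+}(t) > 0$ for $t < \norm{\phi}_{L^\infty(\Omega)}$ and $\mu_\phi^{+}(t) = 0$ for $t > \norm{\phi}_{L^\infty(\Omega)}$ — one gets $F(t) < \infty = F(\infty)$ for $t < \norm{\phi}_{L^\infty(\Omega)}$ and $F(t) = F(\infty)$ for $t > \norm{\phi}_{L^\infty(\Omega)}$, whence $s = \norm{\phi}_{L^\infty(\Omega)}$. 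The standing hypothesis $\norm{\phi}_{L^\infty(\Omega)} > 0$ is precisely the condition $s > 0$ required to invoke the lemma.

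Applying Lemma \ref{lem:left_cont_approx} to this $F$ with the given $\eps > 0$ then produces an increasing sequence $0 = t_0 < t_1 < t_2 < \dots$ of elements of $[0, s) = [0, \norm{\phi}_{L^\infty(\Omega)})$ satisfying $\lim_{i \to \infty} t_i = s = \norm{\phi}_{L^\infty(\Omega)}$ and, for every $i \in \Z_{> 0}$ and every $t \in (t_{i-1}, t_i]$, the estimate $\abs{F(t_i) - F(t)} \le \eps$; spelling out $F = (\mu_\phi^{+})^{-\gamma}$ turns this into exactly the inequality claimed. I expect no real obstacle in this argument: the only points that need a moment's attention are the left-continuity of the composition $F$, the fact that $F$ maps into $[0, \infty]$ (which is where boundedness of $\Omega$, hence finiteness of $m_n(\Omega)$, enters), and the identification $s = \norm{\phi}_{L^\infty(\Omega)}$ via the vanishing behaviour of $\mu_\phi^{+}$.
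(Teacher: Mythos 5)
Your proposal is correct and follows exactly the same route as the paper: apply the Staircase Lemma to $F = 1/(\mu_\phi^+)^\gamma$, verifying that $F$ is non-decreasing and left-continuous as the composition of the left-continuous non-increasing $\mu_\phi^+$ with the continuous non-increasing map $u \mapsto u^{-\gamma}$, and identifying the threshold $s = \norm{\phi}_{L^\infty(\Omega)}$ from the vanishing behavior of $\mu_\phi^+$. The paper states this in a single remark preceding the corollary; you have simply spelled out the verifications that the paper leaves implicit.
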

	
	Proposition \ref{prop:distr_Sobolev_ineq} now follows by combining Lemma \ref{lem:bound_difference_by_gradient} with Corollary \ref{cor:distr_inv_approx}.
	
	\begin{proof}[Proof of Proposition \ref{prop:distr_Sobolev_ineq}]
		Fix $\eps > 0$. If $\norm{\phi}_{L^\infty(\Omega)} = 0$, then the claim is trivial. Otherwise, since $\phi \in L^1(\Omega)$, we have $\mu_\phi^{+}(t) < \infty$ for all $t > 0$, and we may hence use Corollary \ref{cor:distr_inv_approx} to select an increasing sequence $t_0 < t_1 < t_2 < \dots$ in $[0, \norm{\phi}_{L^\infty(\Omega)})$ for which $t_0 = 0$, $\lim_{i \to \infty} t_i = \norm{\phi}_{L^\infty(\Omega)}$, and
		\begin{equation}\label{eq:upper_bound_eps_est}
			\frac{1}{[\mu^{+}_\phi(t_{i+1})]^{\frac{n-1}{n}}}  
			\le \frac{1}{[\mu^{+}_\phi(t)]^{\frac{n-1}{n}}} + \eps
		\end{equation}
		for all $t \in (t_{i}, t_{i+1}]$, $i \in \Z_{\ge 0}$. Now, by Lemma \ref{lem:bound_difference_by_gradient}, we have
		\[
		\norm{\phi}_{L^\infty(\Omega)}
		= \sum_{i = 0}^\infty (t_{i+1} - t_i)
		\le \frac{1}{n \omega_n^{1/n}} \sum_{i = 0}^\infty \int_{\phi^{-1}(t_i, t_{i+1})} 
		\frac{\abs{\nabla\phi(x)}}{[\mu^{+}_\phi(t_{i+1})]^\frac{n-1}{n}} \dd m_n(x).
		\]
		Moreover, \eqref{eq:upper_bound_eps_est} ensures that for every $x \in \phi^{-1}(t_i, t_{i+1})$, we have
		\[
		\frac{1}{[\mu^{+}_\phi(t_{i+1})]^\frac{n-1}{n}}
		\le \frac{1}{[\mu^{+}_\phi(\phi(x))]^\frac{n-1}{n}} + \eps.
		\]
		Since the sets $\phi^{-1}(t_i, t_{i+1})$ are also disjoint, we may hence combine the integrals, obtaining that
		\[
		\norm{\phi}_{L^\infty(\Omega)}
		\le \frac{1}{n \omega_n^{1/n}} \left( 
		\int_\Omega \frac{\abs{\nabla\phi}}{[\mu^{+}_\phi \circ \phi]^\frac{n-1}{n}} \dd m_n
		+ \eps \int_\Omega \abs{\nabla\phi} \dd m_n
		\right).
		\]
		Now, since $\abs{\nabla \phi} \in L^1(\Omega)$, the claim follows by letting $\eps \to 0$ in the above upper bound.
	\end{proof}
	
	\section{Proof of Proposition \ref{prop:weak_weak_mono_cont}}\label{sect:almost_weak_mono_is_cont}
	
	Recall the definition of $\alpha$-almost weakly monotone $W^{1,1}_\loc$-functions from Definition \ref{def:weak_weak_monotone}. We begin by pointing out that if $\phi \in W^{1,p}_\loc(\Omega)$, then all instances of $W^{1,1}_0$-spaces in the definition can be replaced with $W^{1,p}_0$-spaces.
	
	\begin{lemma}\label{lem:almost_weak_mono_exp_p}
		Let $n \ge 2$, let $\Omega \subset \R^n$ be open, let $p \in [1, \infty)$, and let $\phi \in W^{1,p}_\loc(\Omega)$. Then $\phi$ is $\alpha$-almost weakly monotone with constants $C, \alpha > 0$ if and only if, whenever $B_r \subset \Omega$ is a ball of radius $r$ that is compactly contained in $\Omega$, and $M, m \in \R$ are such that
		\[
			(\varphi \vert_{B_r} - M)^+ \in W^{1,p}_0(B_r)
			\quad \text{and} \quad
			(m - \varphi \vert_{B_r})^+ \in W^{1,p}_0(B_r),
		\]
		we have
		\[
			\norm{(\varphi \vert_{B_r} - M)^+}_{L^\infty(B_r)} \le Cr^{\alpha}
			\quad \text{and} \quad
			\norm{(m - \varphi \vert_{B_r})^+}_{L^\infty(B_r)} \le Cr^{\alpha}.
		\]
	\end{lemma}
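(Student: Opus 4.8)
The plan is to establish the two implications separately. The forward one (``almost weak monotonicity $\Rightarrow$ the $W^{1,p}_0$-formulation'') is immediate, while the reverse one reduces to a single elementary embedding fact for Sobolev spaces on a ball.

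For the ``only if'' direction, suppose $\phi$ is $\alpha$-almost weakly monotone in the sense of Definition \ref{def:weak_weak_monotone}, and let $B_r \subset \Omega$ be a ball compactly contained in $\Omega$ and $M, m \in \R$ with $(\varphi\vert_{B_r} - M)^+ \in W^{1,p}_0(B_r)$ and $(m - \varphi\vert_{B_r})^+ \in W^{1,p}_0(B_r)$. Since $B_r$ is bounded, H\"older's inequality gives $\norm{g}_{L^1(B_r)} \le m_n(B_r)^{1 - 1/p} \norm{g}_{L^p(B_r)}$, so any $W^{1,p}(B_r)$-convergent sequence of $C^\infty_c(B_r)$-functions also converges in $W^{1,1}(B_r)$; hence $W^{1,p}_0(B_r) \subseteq W^{1,1}_0(B_r)$. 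Thus the same $M, m$ satisfy the $W^{1,1}_0$-truncation hypotheses of Definition \ref{def:weak_weak_monotone}, and the bounds $\norm{(\varphi\vert_{B_r} - M)^+}_{L^\infty(B_r)} \le Cr^\alpha$ and $\norm{(m - \varphi\vert_{B_r})^+}_{L^\infty(B_r)} \le Cr^\alpha$ follow with the same constants $C, \alpha$.

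For the ``if'' direction, assume the $W^{1,p}_0$-formulation holds with constants $C, \alpha$, and let $B_r \subset \Omega$ be compactly contained in $\Omega$ with $M, m \in \R$ such that $(\varphi\vert_{B_r} - M)^+ \in W^{1,1}_0(B_r)$ and $(m - \varphi\vert_{B_r})^+ \in W^{1,1}_0(B_r)$. I would show that in fact \emph{both} truncations lie in $W^{1,p}_0(B_r)$, and then apply the $W^{1,p}_0$-hypothesis with the same $M, m$ to conclude. First, since $\overline{B_r}$ is a compact subset of $\Omega$ and $\varphi \in W^{1,p}_\loc(\Omega)$, one has $\varphi\vert_{B_r} \in W^{1,p}(B_r)$. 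Next, $0 \le (\varphi\vert_{B_r} - M)^+ \le \abs{\varphi} + \abs{M} \in L^p(B_r)$ because $B_r$ is bounded, so by the truncation properties recalled in Section \ref{sect:prelims} we get $(\varphi\vert_{B_r} - M)^+ = \max(\varphi\vert_{B_r}, M) - M \in W^{1,p}(B_r)$; applying the same reasoning to $m - \min(\varphi\vert_{B_r}, m)$ gives $(m - \varphi\vert_{B_r})^+ \in W^{1,p}(B_r)$. Everything therefore comes down to the following fact: \emph{if $u \in W^{1,p}(B_r) \cap W^{1,1}_0(B_r)$, then $u \in W^{1,p}_0(B_r)$.}

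The only real content is this embedding claim, which I would prove by a zero-extension and inward-dilation argument. If $u \in W^{1,1}_0(B_r)$, approximating $u$ in $W^{1,1}(B_r)$ by functions in $C^\infty_c(B_r)$ and extending by zero shows that the zero extension $\tilde u$ of $u$ lies in $W^{1,1}(\R^n)$, with $\nabla \tilde u$ the zero extension of $\nabla u$; since moreover $\nabla u \in L^p(B_r)$ and $u \in L^p(B_r)$ with $B_r$ bounded, in fact $\tilde u \in W^{1,p}(\R^n)$ with $\spt \tilde u \subseteq \overline{B_r}$. Writing $B_r = \B^n(x_0, r)$ and setting $\tilde u_\lambda(x) := \tilde u(x_0 + \lambda^{-1}(x - x_0))$ for $\lambda \in (0,1)$, each $\tilde u_\lambda$ lies in $W^{1,p}(\R^n)$ and is supported in $\overline{\B^n(x_0, \lambda r)}$, which is compactly contained in $B_r$; mollifying $\tilde u_\lambda$ with a small enough parameter produces functions in $C^\infty_c(B_r)$ converging to $\tilde u_\lambda$ in $W^{1,p}(B_r)$, so $\tilde u_\lambda \in W^{1,p}_0(B_r)$. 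Finally, continuity of dilations on $W^{1,p}(\R^n)$ gives $\tilde u_\lambda \to \tilde u$ in $W^{1,p}(\R^n)$ as $\lambda \to 1^-$, and since $W^{1,p}_0(B_r)$ is closed in $W^{1,p}(B_r)$ we conclude $u = \tilde u\vert_{B_r} \in W^{1,p}_0(B_r)$. I do not expect a genuine obstruction here; the one delicate point is the passage from ``support in the closed ball $\overline{B_r}$'' to ``support compactly inside the open ball $B_r$'', which is exactly what the inward dilation step handles, and which could alternatively be deduced from the compatibility of the $W^{1,1}$- and $W^{1,p}$-trace operators on the ball.
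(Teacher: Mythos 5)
Your proof is correct and takes essentially the same route as the paper: both arguments reduce to the characterization $W^{1,p}_0(B_r) = W^{1,p}(B_r) \cap W^{1,1}_0(B_r)$, with the nontrivial inclusion established by zero-extension. The only difference is that where the paper invokes a textbook proposition for $C^1$-domains to pass from ``$\tilde u \in W^{1,p}(\R^n)$ supported in $\overline{B_r}$'' to ``$u \in W^{1,p}_0(B_r)$'', you spell out the standard inward-dilation-and-mollification argument that underlies that proposition.
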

	\begin{proof}
		The claim follows immediately from the fact that for all balls $B_r$ as above, we have
		\begin{equation}\label{eq:W1p0_space_characterization}
			W^{1,p}_0(B_r) = W^{1,p}(B_r) \cap W^{1,1}_0(B_r).
		\end{equation}
		For convenience, we recall a proof of the characterization \eqref{eq:W1p0_space_characterization}. The inclusion $W^{1,p}_0(B_r) \subset W^{1,p}(B_r) \cap W^{1,1}_0(B_r)$ is trivial. For the other inclusion in the non-trivial case $p > 1$, if $\phi \in W^{1,p}(B_r) \cap W^{1,1}_0(B_r)$, one observes that the zero extension $\phi_0 \in L^p(\R^n)$ of $\phi$ is first in $W^{1,1}(\R^n)$ since $\phi \in W^{1,1}_0(B_r)$, and then consequently in $W^{1,p}(\R^n)$ since $\nabla \phi_0 = \nabla \phi$ a.e.\ in $\Omega$ and $\nabla \phi_0 = 0$ a.e.\ in $\R^n \setminus \Omega$. Thus, since $\partial B_r$ is $C^1$-smooth, we have $\phi \in W^{1,p}_0(B_r)$; see e.g.\ \cite[Proposition 9.18]{Brezis_book}.
	\end{proof}

	Our objective in this section is to prove Proposition \ref{prop:weak_weak_mono_cont}, which states that almost weakly monotone $W^{1,n}_\loc$-functions have a continuous representative. For this, given a measurable function $\phi \colon \Omega \to \R$ and a measurable set $A \subset \Omega$, we define that the \emph{oscillation} of $\phi$ over $A$ is given by
	\[
		\osc_A(\phi) := \sup_{x \in A} \phi(x) - \inf_{y \in A} \phi(y),
	\]
	and the \emph{essential oscillation} of $\phi$ over $A$ is given by
	\[
		\essosc_A(\phi) := \esssup_{x \in A} \phi(x) - \essinf_{y \in A} \phi(y).
	\]
	Our strategy, which parallels the standard approach for proving the continuity of weakly monotone functions, is to use the following lemma.
	\begin{lemma}\label{lem:essosc_implies_cont}
		Let $\Omega \subset \R^n$ be open, and let $\phi \colon \Omega \to \R$ be measurable. Suppose that for every $x_0 \in \Omega$, we have
		\[
			\lim_{r \to 0} \essosc_{\B^n(x_0, r)}(\phi) = 0.
		\]
		Then there is a continuous function $\tilde{\phi} \colon \Omega \to \R$ such that $\tilde{\phi}(x) = \phi(x)$ for a.e.\ $x \in \Omega$.
	\end{lemma}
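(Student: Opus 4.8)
The plan is to construct the continuous representative explicitly as a pointwise limit of essential suprema over shrinking balls, and then identify it with $\phi$ almost everywhere. For $x_0 \in \Omega$ set
\[
	\tilde\phi(x_0) := \lim_{r \to 0^+} \esssup_{\B^n(x_0, r)} \phi .
\]
First I would check that this is well defined and finite. The hypothesis $\essosc_{\B^n(x_0,r)}(\phi) \to 0$ forces $\phi$ to be essentially bounded on every sufficiently small ball about $x_0$: such a ball has positive measure and $\phi$ is real-valued there, so once the essential oscillation is $\le 1$ both $\esssup$ and $\essinf$ over the ball are finite. As $r$ decreases, $r \mapsto \esssup_{\B^n(x_0,r)}\phi$ is non-increasing and $r \mapsto \essinf_{\B^n(x_0,r)}\phi$ is non-decreasing, while their difference $\essosc_{\B^n(x_0,r)}(\phi)$ tends to $0$; hence both limits exist, are finite, and coincide, and $\tilde\phi(x_0)$ is their common value. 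By monotonicity one records the sandwich
\[
	\essinf_{\B^n(x_0,r)} \phi \ \le\ \tilde\phi(x_0) \ \le\ \esssup_{\B^n(x_0,r)} \phi
\]
for every $r$ with $\B^n(x_0,r) \subset \Omega$. Moreover, whenever $\B^n(x_0,r) \subset \Omega$ and $x \in \B^n(x_0,r)$, small balls about $x$ lie inside $\B^n(x_0,r)$, so $\tilde\phi(x)$ lies in the same interval, which gives the pointwise estimate $\lvert \tilde\phi(x) - \tilde\phi(x_0) \rvert \le \essosc_{\B^n(x_0,r)}(\phi)$.

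Continuity of $\tilde\phi$ is then immediate from this last estimate: given $x_0 \in \Omega$ and $\delta > 0$, choose $r$ with $\B^n(x_0,r) \subset \Omega$ and $\essosc_{\B^n(x_0,r)}(\phi) < \delta$; then $\lvert \tilde\phi(x) - \tilde\phi(x_0) \rvert < \delta$ for all $x \in \B^n(x_0,r)$, so $\tilde\phi \in C(\Omega)$.

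It remains to show $\tilde\phi = \phi$ almost everywhere, and this is the step where I expect the only real care to be needed. Fix $\delta > 0$. For each $x_0 \in \Omega$ pick $r(x_0) > 0$ with $\B^n(x_0, r(x_0)) \subset \Omega$ and $\essosc_{\B^n(x_0, r(x_0))}(\phi) < \delta$. By the sandwich, for a.e.\ $x \in \B^n(x_0, r(x_0))$ we have $\lvert \phi(x) - \tilde\phi(x_0) \rvert \le \essosc_{\B^n(x_0,r(x_0))}(\phi) < \delta$, while the pointwise estimate gives $\lvert \tilde\phi(x) - \tilde\phi(x_0) \rvert < \delta$ for \emph{all} such $x$; hence $\lvert \phi(x) - \tilde\phi(x) \rvert < 2\delta$ for a.e.\ $x \in \B^n(x_0, r(x_0))$. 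Since $\Omega$ is Lindel\"of, countably many of the balls $\B^n(x_0, r(x_0))$ cover $\Omega$, so $\lvert \phi - \tilde\phi \rvert < 2\delta$ a.e.\ in $\Omega$. Taking $\delta = 1/k$ and letting $k \to \infty$ yields $\phi = \tilde\phi$ a.e.

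The argument is elementary throughout; the delicate point is precisely the a.e.\ identification, where one must control $\tilde\phi(x)$ — not merely $\phi(x)$ — on the approximating balls and then patch the local estimates together over a countable subcover. (An alternative here: since $\phi \in W^{1,1}_\loc(\Omega)$ is not assumed, but $\phi \in L^1_\loc(\Omega)$ by the local essential boundedness established above, one may instead invoke the Lebesgue differentiation theorem — at a Lebesgue point $x_0$ of $\phi$ one has $\dashint_{\B^n(x_0,r)} \phi \to \phi(x_0)$, and as $\dashint_{\B^n(x_0,r)} \phi$ lies between $\essinf_{\B^n(x_0,r)}\phi$ and $\esssup_{\B^n(x_0,r)}\phi$, the sandwich forces $\phi(x_0) = \tilde\phi(x_0)$.)
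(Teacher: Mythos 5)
Your construction of $\tilde\phi$ as the common limit of essential suprema and infima over shrinking balls, and your proof that $\tilde\phi$ is continuous, are essentially the same as the paper's (the paper writes the two-sided inequality out with a $y_0$ and a shrunken ball $\B^n(y_0, \delta - |x_0 - y_0|)$, while you observe directly that small balls about $x$ sit inside $\B^n(x_0,r)$, so $\tilde\phi(x)$ is trapped in the same interval — the same idea in slightly different packaging). Where you genuinely diverge is in the a.e.\ identification $\tilde\phi = \phi$. The paper invokes the Lebesgue differentiation theorem at a Lebesgue point $x_0$, sandwiching $\phi_{\B^n(x_0,r)}$ between the local essential infimum and supremum. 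You instead run a covering argument: fix $\delta>0$, cover $\Omega$ by balls on which the essential oscillation is below $\delta$, establish $|\phi - \tilde\phi| < 2\delta$ a.e.\ on each such ball by combining the sandwich for $\phi$ with the pointwise estimate for $\tilde\phi$, pass to a countable subcover by Lindel\"of, and let $\delta\to 0$. Both routes are correct; yours is more elementary (no measure-theoretic differentiation needed), and you also correctly flag the subtlety that the Lebesgue differentiation argument requires $\phi \in L^1_\loc(\Omega)$ — a point the paper leaves implicit — and note that this follows from the local essential boundedness that the hypothesis forces. Your alternative remark is thus precisely the paper's proof, with a small gap filled.
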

	\begin{proof}
		Let $x_0 \in \Omega$. We define
		\begin{align*}
			\esslimsup_{x \to x_0} \phi(x) &:=
			\lim_{r \to 0} \esssup_{x \in \B^n(x_0, r)} \phi(x)
			\quad \text{and}\\
			\essliminf_{x \to x_0} \phi(x) &:=
			\lim_{r \to 0} \essinf_{x \in \B^n(x_0, r)} \phi(x)
		\end{align*}
		Both of these limits are guaranteed to exist, since $r \mapsto \esssup_{x \in \B^n(x_0, r)} \phi(x)$ is non-decreasing and $r \mapsto \essinf_{x \in \B^n(x_0, r)} \phi(x)$ is non-increasing. It is also clear that $\essliminf_{x \to x_0} \phi(x) \le \esslimsup_{x \to x_0} \phi(x)$. Moreover, our assumption ensures that $\esslimsup_{x \to x_0} \phi(x) = \essliminf_{x \to x_0} \phi(x)$. Thus, we define
		\[
			\tilde{\phi}(x_0) := \esslimsup_{x \to x_0} \phi(x) = \essliminf_{x \to x_0} \phi(x).
		\]
		
		We observe that whenever $0 < r < \dist(x_0, \partial \Omega)$, we have
		\[
			\essinf_{x \in \B^n(x_0, r)} \phi(x)
			\le \phi_{\B^n(x_0, r)}
			\le \esssup_{x \in \B^n(x_0, r)} \phi(x).
		\]
		Thus, if $x_0$ is a Lebesgue point of $\phi$, then $\phi(x) = \lim_{r \to 0} \phi_{\B^n(x_0, r)} = \tilde{\phi}(x)$, showing that $\phi$ and $\tilde{\phi}$ indeed agree a.e.\ in $\Omega$.
		
		It remains to verify that $\tilde{\phi}$ is continuous. Thus, let $\eps > 0$, let $\delta > 0$ be such that $\B^n(x_0, \delta) \subset \Omega$ and $\essosc_{\B^n(x_0, \delta)}(\phi) < \eps$, and suppose that $y_0 \in \B^n(x_0, \delta)$. Then
		\begin{multline*}
			\tilde{\phi}(x_0) \le \esssup_{x \in \B^n(x_0, \delta)} \phi(x)
			\le \eps + \essinf_{x \in \B^n(x_0, \delta)} \phi(x)\\
			\le \eps + \essinf_{x \in \B^n(y_0, \delta - \abs{x_0 - y_0})} \phi(x)
			\le \eps + \tilde{\phi}(y_0).
		\end{multline*}
		In an analogous manner with the suprema and infima reversed, we get that $\tilde{\phi}(x_0) \ge \tilde{\phi}(y_0) - \eps$. Thus $\abs{\tilde{\phi}(x_0) - \tilde{\phi}(y_0)} < \eps$ for all $y_0 \in \B^n(x_0, \delta)$, proving the continuity of $\tilde{\phi}$.
	\end{proof}
	
	Thus, Proposition \ref{prop:weak_weak_mono_cont} follows by proving the following lemma.
	
	\begin{lemma}\label{lem:ocs_est_for_weak_pseudomonotone}
		Let $n \ge 2$, let $\Omega \subset \R^n$ be open, and let $\phi \in W^{1,n}_\loc(\Omega)$. If $\phi$ is $\alpha$-almost weakly monotone for some $\alpha > 0$, then for every $x_0 \in \Omega$, we have
		\[
			\lim_{r \to 0} \essosc_{\B^n(x_0, r)}(\phi) = 0,
		\]
		and more precisely, $\essosc_{\B^n(x_0, r)}(\phi) = O(\log^{-1/n}(1/r))$ as $r \to 0$.
	\end{lemma}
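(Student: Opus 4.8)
The plan is to bound $\essosc_{\B^n(x_0,r)}(\phi)$ by the sum of the oscillation of $\phi$ over a sphere $\S^{n-1}(x_0,\rho)$ with $\rho$ close to $r$, plus a term $Cr^\alpha$ coming from almost weak monotonicity, and then to control the spherical oscillation term by the Morrey-type estimate on spheres from Lemma \ref{lem:good_representative_lemma}. Fix $x_0\in\Omega$ and $R>0$ with $\B^n(x_0,R)\subset\subset\Omega$. Let $\tilde\phi$ be the good representative from Lemma \ref{lem:good_representative_lemma}, which we may assume is the representative we work with. For a.e.\ $\rho\in(0,R)$, $\tilde\phi|_{\S^{n-1}(x_0,\rho)}$ is continuous, so it attains a maximum $M(\rho)$ and minimum $m(\rho)$; by part \eqref{enum:repr_truncs} of that lemma, $(\phi|_{B_\rho}-M(\rho))^+$ and $(m(\rho)-\phi|_{B_\rho})^+$ lie in $W^{1,n}_0(B_\rho)$. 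Hence $\alpha$-almost weak monotonicity gives
\[
	\norm{(\phi|_{B_\rho}-M(\rho))^+}_{L^\infty(B_\rho)}\le C\rho^\alpha
	\quad\text{and}\quad
	\norm{(m(\rho)-\phi|_{B_\rho})^+}_{L^\infty(B_\rho)}\le C\rho^\alpha,
\]
which says $m(\rho)-C\rho^\alpha\le\phi\le M(\rho)+C\rho^\alpha$ a.e.\ in $B_\rho$, so $\essosc_{B_\rho}(\phi)\le M(\rho)-m(\rho)+2C\rho^\alpha=\osc_{\S^{n-1}(x_0,\rho)}(\tilde\phi)+2C\rho^\alpha$.

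The remaining task is to show $\osc_{\S^{n-1}(x_0,\rho)}(\tilde\phi)\to0$ along a suitable sequence of radii $\rho\to0$. By part \eqref{enum:repr_Morrey} of Lemma \ref{lem:good_representative_lemma}, for a.e.\ $\rho\in(0,R)$,
\[
	\left(\osc_{\S^{n-1}(x_0,\rho)}\tilde\phi\right)^n\le C(n,n)\,\rho\int_{\S^{n-1}(x_0,\rho)}\abs{\nabla\phi}^n\,\dd\cH^{n-1},
\]
using $p=n>n-1$, so $p-(n-1)=1$. Since $\phi\in W^{1,n}_\loc(\Omega)$ we have $\int_{\B^n(x_0,R)}\abs{\nabla\phi}^n<\infty$, and by the coarea-type integration in polar coordinates, $\int_0^R\bigl(\int_{\S^{n-1}(x_0,\rho)}\abs{\nabla\phi}^n\,\dd\cH^{n-1}\bigr)\dd\rho<\infty$; hence $g(\rho):=\rho\int_{\S^{n-1}(x_0,\rho)}\abs{\nabla\phi}^n\,\dd\cH^{n-1}$ satisfies $\rho^{-1}g(\rho)\in L^1(0,R)$, which forces $\essliminf_{\rho\to0}g(\rho)=0$ (otherwise $\rho^{-1}g(\rho)\ge c\rho^{-1}$ near $0$, not integrable). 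Therefore there is a sequence $\rho_j\to0$ along which the right-hand side, and hence $\osc_{\S^{n-1}(x_0,\rho_j)}(\tilde\phi)$, tends to $0$.

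To finish, for each small $r>0$ pick such a $\rho_j\in(r/2,r)$ with the spherical integral small; monotonicity of $r\mapsto\essosc_{\B^n(x_0,r)}(\phi)$ in $r$ gives
\[
	\essosc_{\B^n(x_0,r/2)}(\phi)\le\essosc_{\B^n(x_0,\rho_j)}(\phi)\le\osc_{\S^{n-1}(x_0,\rho_j)}(\tilde\phi)+2C\rho_j^\alpha,
\]
and letting $r\to0$ (so $\rho_j\to0$) makes both terms vanish, proving $\lim_{r\to0}\essosc_{\B^n(x_0,r)}(\phi)=0$. Combined with Lemma \ref{lem:essosc_implies_cont}, this yields Proposition \ref{prop:weak_weak_mono_cont}. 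The main obstacle is the bookkeeping around "a.e.\ $\rho$": one must ensure that the radii where the good representative is well-behaved, where the spherical $W^{1,n}$-integral is finite, and where $g(\rho)$ is small all intersect, and that the chosen $\rho_j$ can be taken in $(r/2,r)$; this is where the integrability of $\rho^{-1}g(\rho)$ does the real work, since it simultaneously handles the null-set issues and the smallness of the spherical oscillation.
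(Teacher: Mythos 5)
Your proof is essentially the same as the paper's, using the same three ingredients: (i) the almost weak monotonicity bound $\essosc_{B_\rho}(\phi) \le \osc_{\S^{n-1}(x_0,\rho)}\tilde\phi + 2C\rho^\alpha$, obtained from parts \eqref{enum:repr_cont} and \eqref{enum:repr_truncs} of Lemma \ref{lem:good_representative_lemma}; (ii) the Morrey-on-spheres bound from part \eqref{enum:repr_Morrey}; and (iii) integration in $\rho$ over $(0,R)$ with the weight $1/\rho$ to exploit the finiteness of $\norm{\nabla\phi}_{L^n(B(R))}$. The paper integrates $[\essosc_{B(\rho)}\phi]^n/\rho$ directly and then argues by contradiction from the monotonicity of $\rho \mapsto \essosc_{B(\rho)}\phi$; you instead integrate the spherical term $\rho^{-1}g(\rho)$, extract a sequence $\rho_j \to 0$ with $g(\rho_j)\to 0$, and then use the monotonicity of $\essosc$ the other way round. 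These are two presentations of the same idea.

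There is one small imprecision in your final paragraph that is worth flagging. You write that one can pick, "for each small $r>0$," a $\rho_j\in(r/2,r)$ with $g(\rho_j)$ small. The statement $\essliminf_{\rho\to 0}g(\rho)=0$ by itself does not grant this: it only gives a sequence $\rho_j\to 0$ along which $g\to 0$, and nothing forces such a $\rho_j$ to fall into a prescribed dyadic annulus $(r/2,r)$. This is a genuine gap in what you actually justified, but it is harmless because the conclusion does not need it. Having the sequence $\rho_j\to 0$ with $g(\rho_j)\to 0$ (and such that $\rho_j$ lies in the full-measure set of good radii, which you correctly flag), you can instead argue: for any $r\le\rho_j$, monotonicity of $\essosc$ gives
\[
\essosc_{\B^n(x_0,r)}(\phi) \le \essosc_{\B^n(x_0,\rho_j)}(\phi)
\le \bigl(C(n)\,g(\rho_j)\bigr)^{1/n} + 2C\rho_j^\alpha,
\]
and the right side tends to $0$ as $j\to\infty$, yielding $\lim_{r\to 0}\essosc_{\B^n(x_0,r)}(\phi)=0$. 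Alternatively, the stronger statement you wanted (smallness of $\essinf_{(r/2,r)}g$ for each small $r$) does follow from the integrability of $\rho^{-1}g(\rho)$ via $\essinf_{(r/2,r)}g \le 2\int_{r/2}^r \rho^{-1}g(\rho)\,\dd\rho \to 0$, so your proof is also repairable along your original lines if you add that line.
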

	\begin{proof}
		Fix a point $x_0 \in \Omega$, and for all $r > 0$, denote $B(r) = \B^n(x_0, r)$ and $S(r) = \S^{n-1}(x_0, r)$. Fix a radius $R > 0$ for which $B(R)$ is compactly contained in $\Omega$. With this $x_0$ and $R$, we then use Lemma \ref{lem:good_representative_lemma} to select a representative $\tilde{\phi}$ of $\phi$.
		
		We fix a $r \in (0, R)$. For every $\rho\in (0, R)$, we let 
		\[
			M(\rho) := \sup_{x \in S(\rho)} \tilde{\phi}(x)
			\quad\text{and}\quad
			m(\rho) := \inf_{x \in S(\rho)} \tilde{\phi}(x).
		\] 
		It follows by parts \eqref{enum:repr_cont} and \eqref{enum:repr_truncs} of Lemma \ref{lem:good_representative_lemma} that for a.e.\ $\rho\in (0, R)$, $m(\rho)$ and $M(\rho)$ are finite,
		\begin{align*}
			(\phi\vert_{B(\rho)} - M(\rho))^+ &\in W^{1,n}_0(B(\rho)),
			\quad \text{and}\\
			(m(\rho) - \phi\vert_{B(\rho)})^+ &\in W^{1,n}_0(B(\rho)).
		\end{align*}
		The definition of almost weak monotonicity then yields for a.e.\ $\rho\in (0, R)$ that
		\begin{align*}
			\esssup_{x \in B(\rho)} \phi(x) &\le M(\rho) + C\rho^{\alpha}
			\quad\text{and}\\
			\essinf_{x \in B(\rho)} \phi(x) &\ge m(\rho) - C\rho^{\alpha}.
		\end{align*}
		Thus, we obtain the bound
		\begin{equation}\label{eq:ess_osc_bound}
			\Bigl[\essosc_{\B^n(x_0, \rho)}(\phi)\Bigr]^n
			\le 2^{n-1} (M(\rho) - m(\rho))^{n} + 2^n C^n \rho^{n\alpha}
		\end{equation}
		for a.e.\ $\rho\in (0, R)$.
		
		Now, by part \eqref{enum:repr_cont} of Lemma \ref{lem:good_representative_lemma}, we observe that $M(\rho)$ and $m(\rho)$ are in $\tilde{\phi} (S(\rho))$ for a.e.\ $\rho\in (0, R)$. For such $\rho$, part \eqref{enum:repr_Morrey} of Lemma \ref{lem:good_representative_lemma} yields that
		\[
			(M(\rho) - m(\rho))^n \le C(n) \rho\int_{S(\rho)} \abs{\nabla \phi}^n \dd \cH^{n-1}
		\]
		We combine this with \eqref{eq:ess_osc_bound}, divide on both sides by $\rho$, and integrate with respect to $\rho$ from $r$ to $R$. The result is that
		\begin{multline*}
			\int^R_{r} \frac{[\essosc_{\B^n(x_0, \rho)}(\phi)]^n}{\rho} \dd \rho\\
			\le C(n) \int^R_{r} \left(C^n \rho^{n\alpha - 1} + \int_{S(\rho)} \abs{\nabla \phi}^n \dd \cH^{n-1}\right) \dd \rho\\ 
			\leq C(n) \left(\frac{C^n}{n\alpha} R^{n\alpha} + \norm{\nabla \phi}_{L^n(B(R))}^n \right).
		\end{multline*}
		This upper bound is finite, since $\phi \in W^{1,n}(B(R))$ and $\alpha > 0$.
		
		However, we may estimate in the other direction that
		\[
			\int^R_{r} \frac{[\essosc_{\B^n(x_0, \rho)}(\phi)]^n}{\rho} \dd \rho
			\ge \left[\essosc_{\B^n(x_0, r)}(\phi)\right]^n \int^R_r \frac{\dd \rho}{\rho}
			= \left[\essosc_{\B^n(x_0, r)}(\phi)\right]^n \log \frac{R}{r}.
		\]
		Thus, we get that
		\[
			\essosc_{\B^n(x_0, r)}(\phi) \le C(n) \left(\frac{C^n}{n\alpha} R^{n\alpha} + \norm{\nabla \phi}_{L^n(B(R))}^n \right)^\frac{1}{n} \log^{-\frac{1}{n}} \biggl(\frac{R}{r}\biggr), 
		\]
		which proves that $\essosc_{\B^n(x_0, r)}(\phi) = O(\log^{-1/n}(R/r)) = O(\log^{-1/n}(1/r))$ as $r \to 0$, and in particular, that $\lim_{r \to 0} \essosc_{\B^n(x_0, r)}(\phi) = 0$. 
	\end{proof}
	
	Thus, with Lemmas \ref{lem:essosc_implies_cont} and \ref{lem:ocs_est_for_weak_pseudomonotone}, the proof of Proposition \ref{prop:weak_weak_mono_cont} is complete.
	
	\section{Proof of Theorem \ref{thm:continuity_general}}\label{sect:main_result}
	
	We then prove our main results. We begin with Theorem \ref{thm:weakly_vanishing_L_infinity_estimate}; we first recall the statement for the convenience of the reader.
	
	\Linftyestimate*

	\begin{proof}
		We first note that we may assume that $K(x) > 0$ for all $x \in \Omega$. Indeed, if $x \in \Omega$ is such that $K(x) = 0$ and \eqref{eq:divergence_distortion_ineq} applies, then we have $\abs{\nabla\phi(x)}^n \le 0$, and hence $\nabla \phi(x) = 0$. Thus, if we change the value of $K(x)$ to $K(x) = 1$ at this point, then \eqref{eq:divergence_distortion_ineq} remains valid since $\abs{\nabla \phi(x)}^n = 0 \le A(x) = \ip{\nabla \phi(x)}{\xi(x)} + A(x)$. Moreover, changing the values of $K$ to $1$ in $K^{-1}\{0\}$ retains the fact that $K$ is measurable and in $L^p_\loc(\Omega)$.
		
		We may also, for convenience, assume that $\phi \ge 0$ in all of $\Omega$. This is due to the observation that
		\begin{align*}
			\abs{\nabla \phi^{+}(x)}^n &\le K(x)\left( \ip{\nabla \phi^{+}(x)}{\xi(x)} + A(x) \right) 
				\qquad \text{and}\\
			\abs{\nabla \phi^{-}(x)}^n &\le K(x)\left( \ip{\nabla \phi^{-}(x)}{-\xi(x)} + A(x) \right)
		\end{align*}
		for a.e.\ $x \in \Omega$, since at every $x$, each of the above inequalities is either the assumed \eqref{eq:divergence_distortion_ineq} for $\varphi$ or the trivial estimate $0 \le K(x) A(x)$. Moreover, proving the claim for $\phi^{+}$ and $\phi^{-}$ clearly implies it for $\phi$.
		
		Next, we observe that since $K > 0$, \eqref{eq:divergence_distortion_ineq} implies that $0 \le \abs{\nabla \phi(x)}^n/K(x) \le \ip{\nabla \phi(x)}{\xi(x)} + A(x)$ for a.e.\ $x \in \Omega$. Consequently, we have
		\begin{equation}\label{eq:Jacobian_neg_part_bound}
			\ip{\nabla \phi(x)}{\xi(x)}^{-} \le A(x)
		\end{equation}
		for a.e.\ $x \in \Omega$. 
		
		We then fix a $\gamma \in (p^{-1}, 1 - q^{-1})$; since $p^{-1} + q^{-1} < 1$, this interval is non-empty, and the selection can be made in a way that depends only on $p$ and $q$. We observe that $\mu_\phi^{+} \circ \phi$ is finite and positive for a.e.\ $x \in \Omega$; indeed, we only have $\mu_\phi^{+}(\phi(x)) = 0$ if $\phi(x) > \norm{\phi}_{L^{\infty}(\Omega)}$, or if $\phi(x) = \norm{\phi}_{L^{\infty}(\Omega)}$ and $\phi^{-1}\{\norm{\phi}_{L^{\infty}(\Omega)}\}$ is a null-set. We then estimate using \eqref{eq:Jacobian_neg_part_bound} and H\"older's inequality that
		\begin{multline}\label{eq:Jacobian_neg_part_int_bound}
			\int_\Omega \frac{\ip{\nabla \phi(x)}{\xi(x)}^{-}}{(\mu_\phi^{+} \circ \phi)^{\gamma}}
			\le \int_\Omega \frac{A}{(\mu_\phi^{+} \circ \phi)^{\gamma}}\\
			\le \norm{A}_{L^q(\Omega)} \left( \int_\Omega \frac{1}{(\mu_\phi^{+} \circ \phi)
				^{\frac{\gamma q}{q-1}}} \right)^{1 - \frac{1}{q}}.
		\end{multline}
		
		Since $0 < \gamma < 1 - q^{-1}$, we have $0 < \gamma q/(q-1) < 1$. Thus, Lemma \ref{lem:inv_cav_funct_integral} shows that the upper bound of \eqref{eq:Jacobian_neg_part_int_bound} is finite. Since also $\phi \in W^{1,n}_0(\Omega)$ and $\phi \ge 0$ by assumption, Lemma \ref{lem:Jacobian_zero_integral_with_F} now applies with $F = (\mu_\phi^{+})^{-\gamma}$, and yields that
		\[
			\int_\Omega \frac{\ip{\nabla \phi(x)}{\xi(x)}}{(\mu_\phi^{+} \circ \phi)^{\gamma}} = 0.
		\]
		By combining this with \eqref{eq:divergence_distortion_ineq} and the second inequality of \eqref{eq:Jacobian_neg_part_int_bound}, we get that
		\begin{multline}\label{eq:excess_estimate_part_1}
			\int_\Omega \frac{\abs{\nabla \phi}^n}{K(\mu_\phi^{+} \circ \phi)^{\gamma}}
			\le \int_\Omega \frac{\ip{\nabla \phi(x)}{\xi(x)}}{(\mu_\phi^{+} \circ \phi)^{\gamma}}
				+ \int_\Omega \frac{A}{(\mu_\phi^{+} \circ \phi)^{\gamma}}\\
			\le \norm{A}_{L^q(\Omega)} \left( \int_\Omega \frac{1}{(\mu_\phi^{+} \circ \phi)
				^{\frac{\gamma q}{q-1}}} \right)^{1 - \frac{1}{q}}.
		\end{multline}
		
		We then begin assembling the desired estimate. We first apply the superlevel Sobolev inequality, Proposition \ref{prop:distr_Sobolev_ineq}, which yields that
		\begin{equation}\label{eq:excess_estimate_part_2}
			\norm{\phi}_{L^\infty(\Omega)}^n
			\le \frac{1}{n^n \omega_n} \left( 
				\int_\Omega \frac{\abs{\nabla \phi}}{(\mu_\phi^{+} \circ \phi)^{\frac{n-1}{n}}}\right)^n
		\end{equation}
		Then we perform a 3-exponent H\"older's inequality, with exponents $n$, $np$, and $np/((n-1)p-1)$. Note that since $p^{-1} + q^{-1} < 1$, we must have $p > 1$, and since also $n \ge 2$, we have $np/((n-1)p-1) \in [1, \infty)$. Thus, we get
		\begin{align}\label{eq:excess_estimate_part_3}
			&\left( \int_\Omega \frac{\abs{\nabla \phi}}{(\mu_\phi^{+} \circ \phi)^{\frac{n-1}{n}}}\right)^n\\
			\nonumber&\quad= \left(\int_\Omega \frac{\abs{\nabla \phi}}{K^{\frac{1}{n}} 
				(\mu_\phi^{+} \circ \phi)^{\frac{\gamma}{n}}} 
			\cdot K^{\frac{1}{n}} \cdot \frac{1}{(\mu_\phi^{+} \circ \phi)^{\frac{n-1 - \gamma}{n}}} \right)^n\\
			\nonumber&\quad\le \left( \int_\Omega \frac{\abs{\nabla \phi}^n}{K (\mu_\phi^{+} \circ \phi)^{\gamma}} \right) 
			\cdot \norm{K}_{L^p(\Omega)} \cdot 
			\left( \int_\Omega \frac{1}{(\mu_\phi^{+} \circ \phi)^{\frac{p(n-1-\gamma)}{(n-1)p - 1}}} \right)^{n - 1 -\frac{1}{p}}.
		\end{align}
		
		Since $\gamma < 1$ and $n \ge 2$, we have $n - 1 - \gamma > 0$. Thus, by Lemma \ref{lem:inv_cav_funct_integral}, the final integral in \eqref{eq:excess_estimate_part_3} is finite if
		\[
			\frac{p(n-1-\gamma)}{(n-1)p - 1} < 1
			\quad \Leftrightarrow \quad
			pn - p - \gamma p < pn - p - 1
			\quad \Leftrightarrow \quad
			\gamma > p^{-1},
		\]
		which is true by our choice of $\gamma$. Moreover, Lemma \ref{lem:inv_cav_funct_integral} yields that
		\begin{multline}\label{eq:excess_estimate_part_4}
			\left( \int_\Omega \frac{1}{(\mu_\phi^{+} \circ \phi)
				^{\frac{\gamma q}{q-1}}} \right)^{1 - \frac{1}{q}}
			\left( \int_\Omega \frac{1}{(\mu_\phi^{+} \circ \phi)^{\frac{p(n-1-\gamma)}{(n-1)p - 1}}} \right)^{n - 1 -\frac{1}{p}}\\
			\le C(n,p,q,\gamma) [m_n(\Omega)]^{(1 - q^{-1} - \gamma) + ((n - 1 - p^{-1}) - (n - 1 - \gamma))}\\
			= C(n,p,q,\gamma) [m_n(\Omega)]^{1 - p^{-1} - q^{-1}}.
		\end{multline}
		Finally, noting that the choice of $\gamma$ depended only on $p$, and $q$, combining all of \eqref{eq:excess_estimate_part_1}-\eqref{eq:excess_estimate_part_4} proves the claim.
	\end{proof}
	
	Now, Theorem \ref{thm:continuity_general} is nearly immediate.
	
	\continuitygeneral*	
	\begin{proof}
		Let $D \subset \Omega$ be an open set that is compactly contained in $\Omega$. By applying Theorem \ref{thm:weakly_vanishing_L_infinity_estimate} on balls $B \subset D$ in conjunction with Lemma \ref{lem:almost_weak_mono_exp_p}, it follows that the function  $\phi$ is $\alpha$-almost weakly monotone in $D$, with $C = C(n,p,q) \norm{K}^{1/n}_{L^q(D)} \norm{A}^{1/n}_{L^q(D)}$ and $\alpha = 1 - p^{-1} - q^{-1} > 0$. Thus, by Proposition \ref{prop:weak_weak_mono_cont}, $\phi$ has a continuous representative in $D$ with the given modulus of continuity. Since the existence of continuous representatives is a local property, the claim follows.
	\end{proof}
	
	We then obtain Theorem \ref{thm:continuity} as an application of Theorem \ref{thm:continuity_general}.
	
	\continuity*
	\begin{proof}
		For every coordinate function $f_i$, we have
		\[
			\abs{\nabla f_i}^n \le \abs{Df}^n \le K J_f + \Sigma = K \left( \ip{\nabla f_i}{\xi_i} + \frac{\Sigma}{K} \right)
		\]
		a.e.\ in $\Omega$, where the weakly divergence-free vector field $\xi_i \in L^{n/(n-1)}(\Omega, \R^n)$ consists of the minors of $D(f_1, \dots, f_{i-1}, f_{i+1}, \dots, f_n)$ with the appropriate signs. Thus, Theorem \ref{thm:continuity_general} applies, and $f_i$ is continuous.
	\end{proof}
	
	\begin{rem}\label{rem:K_under_1_trick}
		Theorem \ref{thm:continuity} assumes that $K(x) \ge 1$ for all $x \in \Omega$, which is standard in the literature on mappings of finite distortion. This could be upgraded to $K(x) \ge 0$ for all $x \in \Omega$ by using the fact that Theorem \ref{thm:continuity_general} only assumes non-negativity of $K$, along with the observation made in the beginning of the proof of Theorem \ref{thm:weakly_vanishing_L_infinity_estimate} that one may re-define $K$ in $K^{-1}\{0\}$. 
		
		However, there is also a slightly better version of Theorem \ref{thm:continuity} for $K \colon \Omega \to [0, \infty)$ than what we described above. Namely, one can apply Theorem \ref{thm:continuity} when $K, \Sigma \colon \Omega \to [0, \infty)$ and $p, q \in [1, \infty]$ are such that
		\[
			K \in L^p_\loc(\Omega) \quad \text{and} \quad \frac{\Sigma}{\max(1, 2K)} \in L^{q}_\loc(\Omega)
			\quad \text{with} \quad \frac{1}{p} + \frac{1}{q} < 1.
		\]
		This is due to the following trick: \emph{if $n \ge 2$, $\Omega \subset \R^n$ is open, and $f \in W^{1,n}_\loc(\Omega, \R^n)$ satisfies \eqref{eq:generalized_finite_distortion} with $K, \Sigma \colon \Omega \to [0, \infty]$, then 
		\begin{equation}\label{eq:K_under_1_fix}
			\abs{Df(x)}^n \le [\max (1, 2K(x))] J_f(x) + 4\Sigma(x) 
		\end{equation}
		for a.e.\ $x \in \Omega$.}
		
		Indeed, suppose that \eqref{eq:generalized_finite_distortion} holds at $x \in \Omega$. If $2K(x) \ge 1$, then we may merely use the non-negativity of $\abs{Df(x)}^n$ and $\Sigma$ to conclude that
		\[
			\abs{Df(x)}^n \le 2\abs{Df(x)}^n \le 2K(x) J_f(x) + 2\Sigma(x) \le 2K(x) J_f(x) + 4\Sigma(x).
		\]
		We then consider the other case $2K(x) \le 1$. In this case, by using the estimate $\abs{J_f(x)} \le \abs{Df(x)}^n$, we obtain that 
		\[
			2\Sigma(x) \ge 2\abs{Df(x)}^n - 2K(x) J_f(x) \ge (2 - 2K(x)) \abs{Df(x)}^n \ge \abs{Df(x)}^n.
		\]
		Thus, we also have $\abs{J_f(x)} \le 2\Sigma(x)$, and consequently $J_f(x) + 2\Sigma(x) \ge 0$. It follows that
		\[
			\abs{Df(x)}^n \le 2\Sigma(x) \le J_f(x) + 4\Sigma(x),
		\]
		completing the proof of \eqref{eq:K_under_1_fix}.
	\end{rem}
	
	We conclude by briefly pointing out how Corollary \ref{cor:continuity_VFD} follows from Theorem \ref{thm:continuity}.
	
	\continuityVFD*
	\begin{proof}[Proof of Corollary \ref{cor:continuity_VFD}]
		Since $f \in W^{1,n}_\loc(\Omega, \R^n)$, we have $\abs{f - y_0} \in L^r_\loc(\Omega)$ for every $r \in [1, \infty)$ by the Sobolev embedding theorem. Thus, we have $[\abs{f - y_0}^n \Sigma]/K \in L^{q'}_\loc(\Omega)$ for some $q' \in (p/(p-1), q)$, and Theorem \ref{thm:continuity} yields the claim.
	\end{proof}
	
	\bibliographystyle{abbrv}
	\bibliography{sources}
	
\end{document}